\newtheorem{theo}{Theorem}[section]
\newtheorem{prop}{Proposition}[section]
\newtheorem{lemme}{Lemma}[section]
\newtheorem{defin}{Definition}[section]
\begin{document}


\makeatletter
\def\sommaire{\@restonecolfalse\if@twocolumn\@restonecoltrue\onecolumn
\fi\chapter*{Sommaire\@mkboth{SOMMAIRE}{SOMMAIRE}}
  \@starttoc{toc}\if@restonecol\twocolumn\fi}
\makeatother

\makeatletter
\def\thebibliographie#1{\chapter*{Bibliographie\@mkboth
  {BIBLIOGRAPHIE}{BIBLIOGRAPHIE}}\list
  {[\arabic{enumi}]}{\settowidth\labelwidth{[#1]}\leftmargin\labelwidth
  \advance\leftmargin\labelsep
  \usecounter{enumi}}
  \def\newblock{\hskip .11em plus .33em minus .07em}
  \sloppy\clubpenalty4000\widowpenalty4000
  \sfcode`\.=1000\relax}
\let\endthebibliography=\endlist
\makeatother

\makeatletter
\def\references#1{\section*{R\'ef\'erences\@mkboth
  {R\'EF\'ERENCES}{R\'EF\'ERENCES}}\list
  {[\arabic{enumi}]}{\settowidth\labelwidth{[#1]}\leftmargin\labelwidth
  \advance\leftmargin\labelsep
  \usecounter{enumi}}
  \def\newblock{\hskip .11em plus .33em minus .07em}
  \sloppy\clubpenalty4000\widowpenalty4000
  \sfcode`\.=1000\relax}

\let\endthebibliography=\endlist
\makeatother

\def\lunloc#1#2{L^1_{loc}(#1 ; #2)}
\def\bornva#1#2{L^\infty(#1 ; #2)}
\def\bornlocva#1#2{L_{loc}^\infty(#1 ;\penalty-100{#2})}
\def\integ#1#2#3#4{\int_{#1}^{#2}#3d#4}
\def\reel#1{\R^#1}
\def\norm#1#2{\|#1\|_{#2}}
\def\normsup#1{\|#1\|_{L^\infty}}
\def\normld#1{\|#1\|_{L^2}}
\def\nsob#1#2{|#1|_{#2}}
\def\normbornva#1#2#3{\|#1\|_{L^\infty({#2};{#3})}}
\def\refer#1{~\ref{#1}}
\def\refeq#1{~(\ref{#1})}
\def\ccite#1{~\cite{#1}}
\def\pagerefer#1{page~\pageref{#1}}
\def\referloin#1{~\ref{#1} page~\pageref{#1}}
\def\refeqloin#1{~(\ref{#1}) page~\pageref{#1}}
\def\suite#1#2#3{(#1_{#2})_{#2\in {#3}}}
\def\ssuite#1#2#3{\hbox{suite}\ (#1_{#2})_{#2\in {#3}}}
\def\longformule#1#2{
\displaylines{
\qquad{#1}
\hfill\cr
\hfill {#2}
\qquad\cr
}
}
\def\inte#1{
\displaystyle\mathop{#1\kern0pt}^\circ
}
\def\sumetage#1#2{\sum_{\substack{{#1}\\{#2}}}}
\def\limetage#1#2{\lim_{\substack{{#1}\\{#2}}}}
\def\infetage#1#2{\inf_{\substack{{#1}\\{#2}}}}
\def\maxetage#1#2{\max_{\substack{{#1}\\{#2}}}}
\def\supetage#1#2{\sup_{\substack{{#1}\\{#2}}}}
\def\prodetage#1#2{\prod_{\substack{{#1}\\{#2}}}}

\def\convm#1{\mathop{\star}\limits_{#1}
}
\def\vect#1{
\overrightarrow{#1}
}
\def\Hd#1{{\mathcal H}^{d/2+1}_{1,{#1}}}

\def\derconv#1{\partial_t#1 + v\cdot\nabla #1}
\def\esptourb{\sigma + L^2(\R^2;\R^2)}
\def\tourb{tour\bil\-lon}



\newcommand{\beq}{\begin{equation}}
\newcommand{\eeq}{\end{equation}}
\newcommand{\ben}{\begin{eqnarray}}
\newcommand{\een}{\end{eqnarray}}
\newcommand{\beno}{\begin{eqnarray*}}
\newcommand{\eeno}{\end{eqnarray*}}

\let\al=\alpha
\let\b=\beta
\let\g=\gamma
\let\d=\delta
\let\e=\varepsilon
\let\z=\zeta
\let\lam=\lambda
\let\r=\rho
\let\s=\sigma
\let\f=\phi
\let\vf=\varphi
\let\p=\psi
\let\om=\omega
\let\G= \Gamma
\let\D=\Delta
\let\Lam=\Lambda
\let\S=\Sigma
\let\Om=\Omega
\let\wt=\widetilde
\let\wh=\widehat
\let\convf=\leftharpoonup
\let\tri\triangle

\def\cA{{\mathcal A}}
\def\cB{{\mathcal B}}
\def\cC{{\mathcal C}}
\def\cD{{\mathcal D}}
\def\cE{{\mathcal E}}
\def\cF{{\mathcal F}}
\def\cG{{\mathcal G}}
\def\cH{{\mathcal H}}
\def\cI{{\mathcal I}}
\def\cJ{{\mathcal J}}
\def\cK{{\mathcal K}}
\def\cL{{\mathcal L}}
\def\cM{{\mathcal M}}
\def\cN{{\mathcal N}}
\def\cO{{\mathcal O}}
\def\cP{{\mathcal P}}
\def\cQ{{\mathcal Q}}
\def\cR{{\mathcal R}}
\def\cS{{\mathcal S}}
\def\cT{{\mathcal T}}
\def\cU{{\mathcal U}}
\def\cV{{\mathcal V}}
\def\cW{{\mathcal W}}
\def\cX{{\mathcal X}}
\def\cY{{\mathcal Y}}
\def\cZ{{\mathcal Z}}

\def\virgp{\raise 2pt\hbox{,}}
\def\cdotpv{\raise 2pt\hbox{;}}
\def\eqdef{\buildrel\hbox{\footnotesize d\'ef}\over =}
\def\eqdefa{\buildrel\hbox{\footnotesize def}\over =}
\def\Id{\mathop{\rm Id}\nolimits}
\def\limf{\mathop{\rm limf}\limits}
\def\limfst{\mathop{\rm limf\star}\limits}
\def\sgn{\mathop{\rm sgn}\nolimits}
\def\RE{\mathop{\Re e}\nolimits}
\def\IM{\mathop{\Im m}\nolimits}
\def\im {\mathop{\rm Im}\nolimits}
\def\Sp{\mathop{\rm Sp}\nolimits}
\def\C{\mathop{\mathbb C\kern 0pt}\nolimits}
\def\DD{\mathop{\mathbb D\kern 0pt}\nolimits}
\def\EE{\mathop{\mathbb E\kern 0pt}\nolimits}
\def\K{\mathop{\mathbb K\kern 0pt}\nolimits}
\def\N{\mathop{\mathbb  N\kern 0pt}\nolimits}
\def\Q{\mathop{\mathbb  Q\kern 0pt}\nolimits}
\def\R{{\mathop{\mathbb R\kern 0pt}\nolimits}}
\def\SS{\mathop{\mathbb  S\kern 0pt}\nolimits}
\def\St{\mathop{\mathbb  S\kern 0pt}\nolimits}
\def\Z{\mathop{\mathbb  Z\kern 0pt}\nolimits}
\def\ZZ{{\mathop{\mathbb  Z\kern 0pt}\nolimits}}
\def\H{{\mathop{{\mathbb  H\kern 0pt}}\nolimits}}
\def\PP{\mathop{\mathbb P\kern 0pt}\nolimits}
\def\TT{\mathop{\mathbb T\kern 0pt}\nolimits}
 \def\L {{\rm L}}

\def\h {{\rm h}}
\def\v {{\rm v}}

\newcommand{\ds}{\displaystyle}
\newcommand{\la}{\lambda}
\newcommand{\hn}{{\bf H}^n}
\newcommand{\hnn}{{\mathbf H}^{n'}}
\newcommand{\ulzs}{u^\lam_{z,s}}
\def\bes#1#2#3{{B^{#1}_{#2,#3}}}
\def\pbes#1#2#3{{\dot B^{#1}_{#2,#3}}}
\newcommand{\ppd}{\dot{\Delta}}
\def\psob#1{{\dot H^{#1}}}
\def\pc#1{{\dot C^{#1}}}
\newcommand{\Hl}{{{\mathcal  H}_\lam}}
\newcommand{\fal}{F_{\al, \lam}}
\newcommand{\Dh}{\Delta_{{\mathbf H}^n}}
\newcommand{\car}{{\mathbf 1}}
\newcommand{\X}{{\mathcal  X}}
\newcommand{\fgl}{F_{\g, \lam}}

\newcommand{\andf}{\quad\hbox{and}\quad}
\newcommand{\with}{\quad\hbox{with}\quad}

\def\beginproof {\noindent {\it Proof. }}
\def\endproof {\hfill $\Box$}


\def\vp{{\underline v}}
\def\presspO{{{\underline p}_0}}
\def\presspun{{{\underline p}_1}}
\def\wp{{\underline w}}
\def\wpe{{\underline w}^{\e}}
\def\vapp{v_{app}^\e}
\def\vapph{v_{app}^{\e,h}}
\def\vbar{\overline v}
\def\barEE{\underline{\EE}}


\def\demo{d\'e\-mons\-tra\-tion}
\def\dive{\mathop{\rm div}\nolimits}
\def\curl{\mathop{\rm curl}\nolimits}
\def\cdv{champ de vec\-teurs}
\def\cdvs{champs de vec\-teurs}
\def\cdvdivn{champ de vec\-teurs de diver\-gence nul\-le}
\def\cdvdivns{champs de vec\-teurs de diver\-gence
nul\-le}
\def\stp{stric\-te\-ment po\-si\-tif}
\def\stpe{stric\-te\-ment po\-si\-ti\-ve}
\def\reelnonentier{\R\setminus{\bf N}}
\def\qq{pour tout\ }
\def\qqe{pour toute\ }
\def\Supp{\mathop{\rm Supp}\nolimits\ }
\def\coinfty{in\-d\'e\-fi\-ni\-ment
dif\-f\'e\-ren\-tia\-ble \`a sup\-port com\-pact}
\def\coinftys{in\-d\'e\-fi\-ni\-ment
dif\-f\'e\-ren\-tia\-bles \`a sup\-port com\-pact}
\def\cinfty{in\-d\'e\-fi\-ni\-ment
dif\-f\'e\-ren\-tia\-ble}
\def\opd{op\'e\-ra\-teur pseu\-do-dif\-f\'e\-ren\tiel}
\def\opds{op\'e\-ra\-teurs pseu\-do-dif\-f\'e\-ren\-tiels}
\def\edps{\'equa\-tions aux d\'e\-ri\-v\'ees
par\-tiel\-les}
\def\edp{\'equa\-tion aux d\'e\-ri\-v\'ees
par\-tiel\-les}
\def\edpnl{\'equa\-tion aux d\'e\-ri\-v\'ees
par\-tiel\-les non li\-n\'e\-ai\-re}
\def\edpnls{\'equa\-tions aux d\'e\-ri\-v\'ees
par\-tiel\-les non li\-n\'e\-ai\-res}
\def\ets{espace topologique s\'epar\'e}
\def\ssi{si et seulement si}

\def\pde{partial differential equation}
\def\iff{if and only if}
\def\stpa{strictly positive}
\def\ode{ordinary differential equation}
\def\coinftya{compactly supported smooth}


  \title[Non linear estimate on the life span of (NS)]
  { A non linear estimate on the life span of solutions  \\
  of the three dimensional Navier-Stokes equations
  }

\author[J.-Y. Chemin]{Jean-Yves  Chemin}
\address[J.-Y. Chemin]%
{Laboratoire Jacques Louis Lions - UMR 7598,  Sorbonne Universit\'e \\ Bo\^\i te courrier 187, 4 place Jussieu, 75252 Paris
Cedex 05, France}
\email{chemin@ann.jussieu.fr }
\author[I. Gallagher]{Isabelle Gallagher}
\address[I. Gallagher]%
{DMA, \'Ecole normale sup\'erieure, CNRS, PSL Research University, 75005 Paris 
 \\
and UFR de math\'ematiques, Universit\'e Paris-Diderot, Sorbonne Paris-Cit\'e, 75013 Paris, France.}
\email{gallagher@math.ens.fr}
\subjclass[2010]{}
 \keywords{}
\begin{abstract}
The purpose of this article is to establish bounds from below for the life span of regular solutions to the incompressible Navier-Stokes system, which
involve norms not only of the initial data, but also of nonlinear functions of  the initial data. We provide examples showing that those bounds are significant improvements to the one provided by the classical fixed point argument. One of the important ingredients   is the use of  a scale-invariant energy estimate.
  \end{abstract}

 \maketitle
 
\section{Introdution}
In this article our aim is to give bounds from below for the life span of   solutions to the incompressible Navier-Stokes system in the whole space~$\R^3$. We are not interested here  in the regularity of the initial data: we focus on obtaining bounds from below for the life span associated with regular initial data.  Here regular means that the initial data belongs to the intersection of   all Sobolev spaces of non negative index. Thus all the solutions we consider are regular ones, as long as they exist.

Let us recall   the incompressible Navier-Stokes system, together with some of its basic features. The incompressible Navier-Stokes system  is the following:
$$
(NS) \quad \left\{
\begin{array}{c}
\partial_t u  -\D u + u\cdot \nabla u  =  -\nabla p\\
\dive u = 0\andf u_{|t=0} =u_0\,,
\end{array}
\right.
$$
where~$u$ is a three dimensional, time dependent vector field and~$p$  is the pressure, determined by the incompressibility condition~$\dive u=0$:
$$
-\Delta p =\dive  (u\cdot \nabla u) = \sum_{1 \leq i,j\leq 3} \partial_i\partial_j (u^i u^j) \,. 
$$
  This system has two fundamental properties related to its physical origin:
\begin{itemize}
\item 
  scaling invariance
\item
  dissipation of   kinetic energy.
\end{itemize}

The scaling property is the fact that if a function~$u$ satisfies~$(NS)$  on a time interval~$[0,T]$ with the initial data~$u_0$, then
the function~$u_\lam$ defined by
$$
u_\lam(t,x)\eqdefa \lam u(\lam^2 t,\lam x)
$$
satisfies~Ê$(NS)$ on the time interval~Ê$[0,\lam^{-2} T]$ with the initial data~$\lam u_0(\lam\,\cdot)$. This property is far from being a characteristic property of the system~$(NS)$. It is indeed satisfied by all   systems of the form
$$
(GNS)\quad \left\{
\begin{array}{c}
\partial_t u  -\D u +Q(u,u)  =  0\\
 u_{|t=0} =u_0
\end{array}
\right.\with Q^i (u,u) \eqdefa \sum_{1\leq j,k\leq 3} A^i_{j,k}(D) (u^ju^k)
$$
where the~$A^i_{j,k}(D)$ are smooth homogenenous Fourier multipliers of order~$1$.  Indeed denoting by~${\mathbb P}$ the projection onto divergence free vector fields
$$
{\mathbb P}\eqdefa \mbox{Id} - (\partial_i\partial_j \Delta^{-1})_{ij}
$$
the Navier-Stokes system takes   the form
$$
  \left\{
\begin{array}{c}
\partial_t u  -\D u +{\mathbb P}\dive (u\otimes u)  =  0\\
 u_{|t=0} =u_0\,,
\end{array}
\right.$$
which is of the type~(GNS).
For this class of systems,  the following result holds. The definition of homogeneous Sobolev spaces~$\dot H^s$ is recalled in the Appendix.   
\begin{prop}
\label {wpgnsHs}
{\sl
Let~$u_0$ be a regular three dimensional vector field. A positive time~$T$ exists such that a unique regular solution to (GNS) exists on~$[0,T]$.  Let~$T^\star(u_0)$ be the maximal time of existence of this regular solution.  Then, for any~$\g$ in the interval~$]0,1/2[$, a constant~$c_\g$ exists such that
\begin{equation}\label{lowerboundfpProp}
T^\star(u_0) \geq c_\g \|u_0\|_{\dot H^{\frac 12 +2\g}}^{-\frac 1 \g} \,.
\end{equation}
}
\end{prop}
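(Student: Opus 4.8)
The plan is to establish~\eqref{lowerboundfpProp} by an \emph{a priori} energy estimate in the subcritical space~$\dot H^s$ with $s \eqdefa \frac12+2\g\in\,]\frac12,\frac32[$, together with the (standard) fact that control of the~$\dot H^s$ norm prevents the breakdown of a regular solution. Since~$u_0$ belongs to every Sobolev space of nonnegative index, the local solution~$u$ provided by the first assertion of the statement is regular on~$[0,T^\star(u_0))$, so all the manipulations below are licit.

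The first step is the~$\dot H^s$ energy balance: taking the~$\dot H^s$ scalar product of~(GNS) with~$u$ gives
$$
\frac12\frac{d}{dt}\|u(t)\|_{\dot H^s}^2+\|u(t)\|_{\dot H^{s+1}}^2=-\bigl(Q(u,u)\,\big|\,u\bigr)_{\dot H^s}\,.
$$
To handle the right-hand side I would move one derivative from the first factor to the second (i.e. use $|\nabla|^{2s}=|\nabla|^{s-1}|\nabla|^{s+1}$) and use that the $A^i_{j,k}(D)$ are homogeneous Fourier multipliers of order~$1$, so that
$$
\bigl|\bigl(Q(u,u)\,\big|\,u\bigr)_{\dot H^s}\bigr|\lesssim \|Q(u,u)\|_{\dot H^{s-1}}\,\|u\|_{\dot H^{s+1}}\lesssim \|u\otimes u\|_{\dot H^{s}}\,\|u\|_{\dot H^{s+1}}\,.
$$
Then I would invoke the product law~$\dot H^{1+\g}\cdot\dot H^{1+\g}\hookrightarrow\dot H^{s}$, valid precisely because $1+\g<\frac32$ (this is where the restriction~$\g<\frac12$ enters) and $2(1+\g)-\frac32=s$, followed by the interpolation inequality $\|u\|_{\dot H^{1+\g}}\lesssim\|u\|_{\dot H^s}^{\frac12+\g}\|u\|_{\dot H^{s+1}}^{\frac12-\g}$ (note $1+\g=(\tfrac12+\g)s+(\tfrac12-\g)(s+1)$). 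This gives
$$
\bigl|\bigl(Q(u,u)\,\big|\,u\bigr)_{\dot H^s}\bigr|\lesssim \|u\|_{\dot H^s}^{1+2\g}\,\|u\|_{\dot H^{s+1}}^{2-2\g}\,,
$$
and a Young inequality with conjugate exponents~$\frac1{1-\g}$ and~$\frac1\g$ absorbs the factor~$\|u\|_{\dot H^{s+1}}^{2-2\g}$ into the dissipation, leaving
$$
\frac{d}{dt}\|u(t)\|_{\dot H^s}^2+\|u(t)\|_{\dot H^{s+1}}^2\le C_\g\,\|u(t)\|_{\dot H^s}^{2+\frac1\g}\,.
$$

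Setting~$y(t)\eqdefa\|u(t)\|_{\dot H^s}^2$, the differential inequality $y'\le C_\g\,y^{1+\frac1{2\g}}$ yields $\frac{d}{dt}\bigl(y^{-1/(2\g)}\bigr)\ge-\frac{C_\g}{2\g}$, so that $y$, and with it $\int_0^t\|u\|_{\dot H^{s+1}}^2$, stays finite on $[0,c_\g\|u_0\|_{\dot H^s}^{-1/\g}]$. To turn this into a lower bound on~$T^\star(u_0)$: if $T^\star(u_0)$ were strictly smaller than $c_\g\|u_0\|_{\dot H^s}^{-1/\g}$, then on $[0,T^\star(u_0))$ the solution would remain bounded in $\dot H^s$ and in $L^2_t\dot H^{s+1}$; running the same computation at each higher level $\dot H^{s+k}$, $k\ge1$ (where the nonlinear term is now \emph{linear} in the top norm with coefficients controlled by the already bounded lower norms and an $L^1_t$ weight coming from $\|u\|_{\dot H^{s+1}}^2$), and a Gr\"onwall argument, would keep every Sobolev norm finite up to~$T^\star(u_0)$, so that the solution could be continued past~$T^\star(u_0)$ — a contradiction. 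Hence~\eqref{lowerboundfpProp}.

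The only genuinely delicate point is the bookkeeping in the two nonlinear estimates: one must check that the powers of $\|u\|_{\dot H^s}$ and $\|u\|_{\dot H^{s+1}}$ recombine, after Young's inequality, into exactly the scaling-dictated exponent $2+\frac1\g$ (equivalently that $T^\star\gtrsim\|u_0\|_{\dot H^s}^{-1/\g}$ is the only scale-invariant guess), and that the admissible range of the parameter is exactly~$\g\in\,]0,\frac12[$; everything else is routine. Alternatively one could prove the proposition by a Fujita--Kato fixed point for the Duhamel formulation $u=e^{t\D}u_0-\int_0^t e^{(t-\tau)\D}Q(u,u)(\tau)\,d\tau$ in the space $C([0,T];\dot H^s)\cap L^2([0,T];\dot H^{s+1})$, bounding the bilinear term with the same product law; the contraction condition again reads $T\lesssim\|u_0\|_{\dot H^s}^{-1/\g}$.
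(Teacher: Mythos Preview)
Your argument is correct: the product law $\dot H^{1+\g}\cdot\dot H^{1+\g}\hookrightarrow\dot H^{\frac12+2\g}$ (valid exactly for $\g<\frac12$), the interpolation, the Young inequality, and the resulting ODE all check out, and the propagation of higher regularity is standard.

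The paper, however, takes a different and much shorter route. It never touches a product law or an energy estimate: it simply observes that the map $u_0\mapsto\lambda u_0(\lambda\,\cdot)$ rescales the $\dot H^{\frac12+2\g}$ norm by $\lambda^{2\g}$ and the maximal time by $\lambda^{-2}$, so the function $\underline T(r)\eqdefa\inf\{T^\star(u_0):\|u_0\|_{\dot H^{\frac12+2\g}}=r\}$ satisfies $\underline T(r)=r^{-1/\g}\underline T(1)$, and one sets $c_\g\eqdefa\underline T(1)$. What the paper's argument buys is conceptual clarity (the exponent $-1/\g$ is forced by scaling alone, with no computation) and brevity; what it costs is that the positivity of $\underline T(1)$ --- a \emph{uniform} lower bound over the unit sphere of $\dot H^{\frac12+2\g}$ --- is left implicit (it follows from Fujita--Kato theory, which the paper cites). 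Your approach, by contrast, is self-contained and effectively reproves that uniform bound, yielding in principle an explicit $c_\g$; it is the classical Fujita--Kato calculation in $\dot H^s$, while the paper's is a pure dimensional-analysis shortcut.
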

In the case when~Ê$\g=1/4$  for the particular case of~$(NS)$, this type of result goes back to the seminal work of J. Leray (see\ccite{lerayns}). Let us point out that the same type of result can be proved for the~$L^{3+\frac {6\g} {1-2\g}}$ norm.
\begin{proof}
This result is obtained by a scaling argument. Let us define the following function
$$
\underline T_{\dot H^{\frac 12+2\g}} (r) \eqdefa \inf\bigl\{T^\star(u_0)\,,\ \|u_0\|_{\dot H^{\frac 12+2\g}}=r\bigr\} \,.
$$
We assume that at least one smooth initial data~$u_0$ develops singularites, which means exactly that~$T^\star(u_0)$ is finite.  Let us mention that this lower bound is in fact a minimum (see\ccite{poulon}). Actually the function~$\underline T_{\dot H^{\frac 12+2\g}}$ may be computed using a scaling argument. Observe that 
$$
\|u_0\|_{\dot H^{\frac 12+2\g}} =  r\Longleftrightarrow \|r^{-\frac 1 {2\g}} u_0(r^{-\frac 1 {2\g}}\cdot) \|_{\dot H^{\frac 12+2\g}} = 1\,.
$$ 
As we have~$T^\star(u_0) = r^{-\frac 1 \g} T^\star \big( r^{-\frac 1 {2\g}} u_0(r^{-\frac 1 {2\g}}\cdot)\big)$, we infer that~$
\underline T_{\dot H^{\frac 12+2\g}} (r) = r^{-\frac 1 \g} \underline T_{\dot H^{\frac 12+2\g}} (1)$
and thus  that
$$
T^\star(u_0)\geq c_\g \|u_0\|_{\dot H^{\frac 12+2\g}}^{-\frac 1\g} \with c_\g \eqdefa \underline T_{\dot H^{\frac 12+2\g}} (1)\,.
$$
The proposition is proved.
\end{proof}

\bigskip

Now let us investigate  the optimality of such a result, in particular  concerning the norm appearing in the lower bound~(\ref{lowerboundfpProp}).   Useful results and definitions concerning  Besov spaces are recalled in the Appendix; 
the  Besov norms of particular interest in this text are the~$\dot B^{-1}_{\infty,2}$ norm which is given by
$$
\|a\|_{\dot B^{-1}_{\infty,2}}\eqdefa \Big(\int_0^\infty \|e^{t\D} a\|^2_{L^\infty}\, dt\Big)^\frac12
$$
and the Besov norms~$\dot B^{-\s}_{\infty,\infty}$ for~$\sigma>0$ which are  
$$
\|a\|_{\dot B^{-\s}_{\infty,\infty}} \eqdefa \sup_{t>0} t^{\frac \s 2} \|e^{t\D} a\|_{L^\infty}\,.
$$
It has been known since~\cite{fujitakato} that a smooth  initial data in~$\dot H^{\frac 12}$ (corresponding of course to the limit case~$\gamma = 0$ in Proposition~\ref{wpgnsHs}) generates a smooth solution for some time~$T>0$. 
 Let us point out that in dimension~$3$,   the following inequality holds
$$
\|a\|_{\dot B^{-1} _{\infty,2}} \lesssim  \|a\|_{\dot H^{\frac 12}}.
$$
The norms~$\dot B^{-\s}_{\infty,\infty}$ are the smallest norms invariant by translation and having a given scaling. More precisely, we have the following result, due to Y. Meyer (see Lemma 9 in\ccite{meyerNSlivre}).
\begin{prop}
\label{besovendpoint}
{\sl
Let~$d\geq 1$ and let~$(E,\|\cdot\|_E)$ be a normed space continuously included in~$\cS'(\R^d)$,  the space of tempered distributions on~$\R^d$. Assume that~$E$ is stable by translation and by dilation, and that a constant~$C_0$ exists such that  
$$
\forall (\lam,e) \in ]0,\infty[\times \R^d\,,\ \forall a \in E\,,\quad  \|a(\lam\cdot -e)\|_{E} \leq C_0\lam^{-\s} \|a\|_{E}\,.
$$ 
Then a constant~$C_1$ exists such that 
$$
\forall a \in E\,,\  \|a\|_{\dot B^{-\al}_{\infty,\infty}}  \leq C_1 \|a\|_{E}\,.
$$
}
\end{prop}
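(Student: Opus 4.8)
The plan is to use the heat-semigroup characterisation of the endpoint Besov norm in order to reduce the whole inequality to a \emph{single} duality pairing against a Gaussian, for which the scaling hypothesis on $E$ does all the work. (Here $\al$ in the conclusion is of course meant to be the same exponent as $\s$ in the hypothesis.) Write $g_t$ for the heat kernel on $\R^d$, so that $e^{t\D}a=g_t\ast a$ for $a\in\cS'(\R^d)$ and, pointwise, $e^{t\D}a(x)=\langle a,\,g_t(x-\cdot)\rangle$, which makes sense since $y\mapsto g_t(x-y)$ lies in $\cS(\R^d)$. The heat kernel has the exact scaling $g_t(y)=t^{-d/2}g_1(y/\sqrt t)$ and is even, so an affine change of variables — first for smooth $a$, then extended to $\cS'(\R^d)$ by transposing the dilation and the translation — gives the identity
$$
e^{t\D}a(x)=\langle a_{t,x},\,g_1\rangle\,,\qquad a_{t,x}(z)\eqdefa a(\sqrt t\,z+x)\,.
$$
The point is that $a_{t,x}$ is exactly of the form $a(\lam\cdot-e)$ with $\lam=\sqrt t$ and $e=-x$, so it belongs to $E$ and the hypothesis yields $\|a_{t,x}\|_E\le C_0\,(\sqrt t)^{-\s}\,\|a\|_E=C_0\,t^{-\s/2}\,\|a\|_E$.

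The second ingredient is that the continuous inclusion $E\hookrightarrow\cS'(\R^d)$ means in particular that, for the fixed Schwartz function $g_1$, the linear form $b\mapsto\langle b,g_1\rangle$ is continuous on $E$ (this is precisely continuity of the embedding for the weak-$\ast$ topology on $\cS'$); hence there is a constant $C_{g_1}$ with $|\langle b,g_1\rangle|\le C_{g_1}\|b\|_E$ for all $b\in E$. Applying this with $b=a_{t,x}$ and combining with the previous paragraph, for every $t>0$ and every $x\in\R^d$,
$$
|e^{t\D}a(x)|=|\langle a_{t,x},g_1\rangle|\le C_{g_1}\|a_{t,x}\|_E\le C_0C_{g_1}\,t^{-\s/2}\,\|a\|_E\,.
$$
Taking the supremum over $x$, then multiplying by $t^{\s/2}$ and taking the supremum over $t>0$, gives $\|a\|_{\dot B^{-\s}_{\infty,\infty}}\le C_1\|a\|_E$ with $C_1\eqdefa C_0C_{g_1}$, which is the assertion.

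The argument has no genuinely hard step; the only two points that deserve care are the justification of the identity $e^{t\D}a(x)=\langle a_{t,x},g_1\rangle$ for a general tempered distribution (a routine transposition of the affine change of variables, or a density argument), and the reading of the hypothesis ``$E$ continuously included in $\cS'(\R^d)$'' as continuity of the pairing against each fixed test function, which is exactly what makes the constant $C_{g_1}$ available. One could equally run the proof with Littlewood--Paley blocks $\ppd_j a$ in place of $e^{t\D}a$, but the Gaussian version is more economical since testing against one single function suffices.
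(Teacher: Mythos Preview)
Your proof is correct and follows essentially the same route as the paper's: both use continuity of the inclusion $E\hookrightarrow\cS'(\R^d)$ to bound the pairing against a fixed Gaussian, then exploit the translation/dilation hypothesis to reduce the general evaluation $e^{t\D}a(x)$ to that single pairing. You have simply made explicit the change of variables that the paper compresses into the sentence ``by invariance by translation and dilation of~$E$, we infer immediately that\ldots''.
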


\begin{proof}
Let us simply observe that, as~$E$ is continuously included in~$\cS'(\R^d)$,   a constant~$C$ exists such that for all~$a $ in~$ E$,
$$
\bigl | \langle a, e^{-|\cdot|^2 } \rangle \bigr | \leq C\|a\|_{E}\,.
$$
Then  by invariance by translation and dilation of~$E$, we infer immediately that
$$
\|e^{t\D} a\|_{L^\infty} \leq C_1 t^{-\frac \s 2} \|a\|_E
$$
which proves the proposition.
\end{proof}

\medbreak

Now let us state a first improvement  to Proposition\refer {wpgnsHs} where the life span is bounded from below in terms of the~$\dot B^{-1+2\g}_{\infty,\infty}$ norm of the initial data. 
\begin{theo}
\label {wpgnsBesov}
{\sl
With the notations of Proposition{\rm\refer {wpgnsHs}},  for any~$\g$ in the interval~$]0,1/2[$, a constant~$c'_\g$ exists such that
\begin{equation}\label{lowerboundfpThm}
T^\star(u_0) \geq  T_{\rm FP, \g} (u_0) \eqdefa  c'_\gamma \|u_0\|_{\dot B^{-1+2\gamma}_{\infty,\infty}}^{-\frac 1\gamma}\,.
\end{equation}
}
\end{theo}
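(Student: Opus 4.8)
The plan is to run the classical Picard fixed point scheme for the (GNS) system in a scaling invariant space adapted to the $\dot B^{-1+2\gamma}_{\infty,\infty}$ norm, namely the Kato-type space
$$
\cK_T \eqdefa \Bigl\{ u \in \cC((0,T]; L^\infty) \ :\ \|u\|_{\cK_T} \eqdefa \sup_{0<t\le T} t^{\frac12 - \gamma}\|u(t)\|_{L^\infty} < \infty \Bigr\}\,,
$$
together with the complementary information that $t^{\frac12-\gamma}\|u(t)\|_{L^\infty} \to 0$ as $t\to 0$ (which holds for the free evolution $e^{t\D}u_0$ of any regular datum, since regular data lie in $\dot B^{-1+2\gamma}_{\infty,\infty}$ and are in particular better than the critical space). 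First I would record the heat flow estimate: by definition of the $\dot B^{-1+2\gamma}_{\infty,\infty}$ norm, $\|e^{t\D}u_0\|_{L^\infty} \le t^{-(\frac12-\gamma)}\|u_0\|_{\dot B^{-1+2\gamma}_{\infty,\infty}}$, so $\|e^{t\D}u_0\|_{\cK_T} \le \|u_0\|_{\dot B^{-1+2\gamma}_{\infty,\infty}}$ uniformly in $T$.

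Next I would estimate the bilinear term. Writing the Duhamel formula $u(t) = e^{t\D}u_0 + B(u,u)(t)$ with $B(u,v)(t) = -\int_0^t e^{(t-s)\D}\,Q(u,v)(s)\,ds$ and $Q$ built from order-one Fourier multipliers, the kernel bound $\|e^{\tau\D}Q(f)\|_{L^\infty} \lesssim \tau^{-\frac12}\|f\|_{L^\infty}$ gives
$$
t^{\frac12-\gamma}\|B(u,v)(t)\|_{L^\infty} \lesssim t^{\frac12-\gamma}\int_0^t (t-s)^{-\frac12} s^{-(1-2\gamma)}\,ds\ \|u\|_{\cK_t}\|v\|_{\cK_t}\,,
$$
and the Beta-function integral $\int_0^t (t-s)^{-1/2}s^{2\gamma-1}\,ds = t^{2\gamma-\frac12} B(\tfrac12, 2\gamma)$ is finite precisely because $\gamma\in\,]0,1/2[$, yielding $\|B(u,v)\|_{\cK_T} \le C_\gamma \|u\|_{\cK_T}\|v\|_{\cK_T}$ with $C_\gamma$ independent of $T$. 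By the standard fixed point lemma in a Banach space, the equation is solvable on $[0,T]$ as soon as $4C_\gamma\|e^{t\D}u_0\|_{\cK_T} < 1$; since $T \mapsto \|e^{t\D}u_0\|_{\cK_T}$ is nondecreasing and tends to $0$ as $T\to 0$ (using the vanishing property above, which is where regularity of $u_0$ is used to guarantee solvability for \emph{some} $T>0$), there is a largest such $T$, and on it we get a solution in $\cK_T$.

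Then I would promote this mild solution to the regular solution of Proposition~\ref{wpgnsHs}: for a regular $u_0$ the two notions coincide on their common interval of existence (parabolic smoothing bootstraps the $\cK_T$ solution into $\cC([0,T];\dot H^s)$ for all $s\ge0$, and uniqueness in $\dot H^{1/2}$ identifies it with the solution from Proposition~\ref{wpgnsHs}), so $T^\star(u_0) \ge T$. To extract the explicit lower bound it suffices to note that the smallness condition is in particular met whenever $C_\gamma\, T^{\frac12-\gamma}\,\|e^{t\D}u_0\|_{\cK_\infty}<\tfrac14$; crudely bounding $\|e^{t\D}u_0\|_{\cK_\infty}\le\|u_0\|_{\dot B^{-1+2\gamma}_{\infty,\infty}}$ and solving for $T$ gives $T^\star(u_0)\ge c'_\gamma \|u_0\|_{\dot B^{-1+2\gamma}_{\infty,\infty}}^{-1/\gamma}$ with $c'_\gamma$ depending only on $\gamma$ through $C_\gamma$, which is exactly \eqref{lowerboundfpThm}. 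Alternatively — and perhaps more in the spirit of the paper — one can bypass the explicit computation entirely and argue by scaling exactly as in the proof of Proposition~\ref{wpgnsHs}: the set of $u_0$ with $\|u_0\|_{\dot B^{-1+2\gamma}_{\infty,\infty}} = 1$ that generate a singularity has a positive infimum of life spans $\underline T(1)$ by the fixed point argument above, and the scaling $u_0 \mapsto \lambda u_0(\lambda\cdot)$, which multiplies the $\dot B^{-1+2\gamma}_{\infty,\infty}$ norm by $\lambda^{1-2\gamma}$ and the life span by $\lambda^{-2}$, forces $\underline T(r) = r^{-1/\gamma}\underline T(1)$.

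The main obstacle is the bilinear estimate at the two endpoints of the admissible range: near $\gamma = 0$ the time integral $\int_0^t(t-s)^{-1/2}s^{2\gamma-1}\,ds$ diverges, which is why one cannot reach the scaling-critical space $\dot B^{-1}_{\infty,\infty}$ by this elementary argument (this is the well-known ill-posedness borderline), and near $\gamma = 1/2$ one must be a little careful that the solution space still controls $L^\infty$ in a way compatible with the projector $\PP$ and with identifying the mild solution with the regular one. Everything else — the heat kernel estimates for $e^{t\D}Q$, the abstract fixed point lemma, and the bootstrap to higher regularity — is routine.
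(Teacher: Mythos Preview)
Your Kato-space approach is a perfectly good route to the theorem and is genuinely different from the paper's proof, which works instead in the Littlewood--Paley space
\[
\|f\|_{E^\g_T}\eqdefa \sup_{j\in\ZZ} 2^{-j(1-2\g)}\bigl(\|\D_jf\|_{L^\infty([0,T]\times\R^3)}+2^{2j}\|\D_jf\|_{L^1([0,T];L^\infty)}\bigr)
\]
and estimates the bilinear term via Bony's paraproduct decomposition. Your argument avoids Littlewood--Paley entirely and is the classical Kato weighted-$L^\infty$ scheme, which is more elementary; the paper's space, on the other hand, gives finer frequency information and embeds naturally into $L^2_tL^\infty_x$ (which is useful later in the paper).

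However, there is a computational slip that you should fix. You correctly compute $\int_0^t(t-s)^{-1/2}s^{2\g-1}\,ds=t^{2\g-\frac12}B(\tfrac12,2\g)$, but after multiplying by the weight $t^{\frac12-\g}$ this gives $t^\g B(\tfrac12,2\g)$, not a constant: the bilinear estimate is
\[
\|B(u,v)\|_{\cK_T}\le C_\g\,T^\g\,\|u\|_{\cK_T}\|v\|_{\cK_T}\,,
\]
with the \emph{$T^\g$ factor essential}. (This matches exactly the paper's bilinear bound $\|B(u,v)\|_{E^\g_T}\le C_\g T^\g\|u\|_{E^\g_T}\|v\|_{E^\g_T}$.) Consequently the fixed-point smallness condition is $4C_\g T^\g\|u_0\|_{\dot B^{-1+2\g}_{\infty,\infty}}<1$, which solves directly to $T<c'_\g\|u_0\|_{\dot B^{-1+2\g}_{\infty,\infty}}^{-1/\g}$; the factor $T^{\frac12-\g}$ you introduce in the extraction step has no provenance and gives the wrong exponent. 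Note also that if the bilinear constant \emph{were} $T$-independent, the fixed point would only give a lifespan depending on how fast $t^{\frac12-\g}\|e^{t\D}u_0\|_{L^\infty}\to 0$, which is not controlled by the $\dot B^{-1+2\g}_{\infty,\infty}$ norm alone --- so the scaling argument at the end would not rescue the situation either, since $\underline T(1)$ might be zero. With the $T^\g$ restored, your proof is complete and correct.
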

This theorem is proved in Section\refer {proofclassicrevisted}; the proof relies on a fixed point theorem in a space included in the space of~$L^2$ in time functions, with values in~$L^\infty$.

\medskip

Let us also recall that if a scaling~$0$ norm of a regular initial data is small, then the solution of~$(NS)$ associated with~$u_0$ is global. This a consequence of the Koch and Tataru theorem (see\ccite {kochtataru}) which can be translated  as  follows in the context of smooth solutions.
\begin{theo}
\label {kochtatarusmooth}
{\sl
A constant $c_0$ exists such that for any regular initial data~$u_0$ satisfying
$$
\|u_0\|_{BMO^{-1}} \eqdefa \sup_{t>0} t^{\frac 12} \|e^{t\Delta} u_0\|_{L^\infty} 
+\Big(\supetage {x\in \R^3} {R>0} \frac 1 {R^3} \int_0^{R^2} \int_{B(x,R)}|
e^{t\D}u_0(y) |^2dy dt \Big)^\frac12\leq c_0\,,
$$
the associate solution of~$(GNS)$ is globally regular.
}
\end{theo}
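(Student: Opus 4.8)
The plan is to deduce the statement from the Koch and Tataru theorem (whose proof adapts to the class $(GNS)$, the multipliers $A^i_{j,k}(D)$ being homogeneous of order $1$) combined with the propagation of regularity. Let $u_0$ be regular with $\|u_0\|_{BMO^{-1}}\le c_0$. First, Proposition\refer{wpgnsHs} applied to $(GNS)$ furnishes a maximal regular solution $u$ on $[0,T^\star(u_0))$, so it is enough to prove that $T^\star(u_0)=+\infty$. Next, since $u_0$ belongs in particular to $\dot H^{\frac12}$, uniqueness forces $u$ to coincide on $[0,T^\star(u_0))$ with the solution $U$ provided by the Koch and Tataru theorem in the scaling invariant space $\cX$ defined by the finiteness of
\[
\|v\|_{\cX}\eqdefa\sup_{t>0}t^{\frac12}\|v(t)\|_{L^\infty}+\Big(\supetage{x\in\R^3}{R>0}\frac1{R^3}\int_0^{R^2}\!\!\int_{B(x,R)}|v(y,t)|^2\,dy\,dt\Big)^{\frac12};
\]
under the hypothesis $\|u_0\|_{BMO^{-1}}=\|e^{t\D}u_0\|_{\cX}\le c_0$, this solution $U$ is global and satisfies $\|U\|_{\cX}\lesssim c_0$. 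One is thus reduced to showing that a regular solution of $(GNS)$ which stays bounded in $\cX$ is global.

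To that end I would run the classical propagation of regularity argument, the crucial point being the scaling invariance of the $\cX$ norm. For $s\ge0$ one writes the $\dot H^s$ energy identity along the $(GNS)$ flow,
\[
\frac12\frac{d}{dt}\|u(t)\|_{\dot H^s}^2+\|u(t)\|_{\dot H^{s+1}}^2=-\big(Q(u,u)(t)\,\big|\,u(t)\big)_{\dot H^s},
\]
and, using that $Q$ is built from order one multipliers together with a Kato--Ponce type product estimate, one bounds the right-hand side by a constant times $\|u(t)\|_{L^\infty}\|u(t)\|_{\dot H^{s+1}}\|u(t)\|_{\dot H^s}$. A Young inequality then absorbs a fraction of the dissipation $\|u(t)\|_{\dot H^{s+1}}^2$ and leaves the differential inequality $\frac{d}{dt}\|u(t)\|_{\dot H^s}^2\le C\|u(t)\|_{L^\infty}^2\|u(t)\|_{\dot H^s}^2$. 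Since $\|u(t)\|_{L^\infty}$ stays bounded as $t\to0$ ($u$ being regular) and satisfies $\|u(t)\|_{L^\infty}\le t^{-\frac12}\|U\|_{\cX}$ for $t>0$, the quantity $\int_0^T\|u(t)\|_{L^\infty}^2\,dt$ is finite for every $T<T^\star(u_0)$, and Gronwall's lemma yields $\sup_{[0,T]}\|u(t)\|_{\dot H^s}<\infty$ for every such $T$ and every $s\ge0$. Because the existence time in Proposition\refer{wpgnsHs} depends only on the $\dot H^{\frac12}$ norm of the data, a finite bound on $\|u(t)\|_{\dot H^{\frac12}}$ over $[0,T^\star(u_0))$ rules out $T^\star(u_0)<+\infty$, so $u$ is global and regular.

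The step I expect to be the main obstacle is not really in the above but in the result imported from\ccite{kochtataru}: producing the global solution $U$ bounded in $\cX$ relies on the full Koch--Tataru bilinear estimate, in which the pointwise part $t^{\frac12}\|\cdot\|_{L^\infty}$ and the Carleson (local energy) part of the $\cX$ norm must be balanced against one another — the pointwise part alone is known not to suffice — and the smallness of $c_0$ is essential. Granting that, the propagation of regularity above, the identification $u=U$ via uniqueness (on a short time interval, where a regular solution has small $\cX$ norm, then by a connectedness argument), and the conclusion through the existence-time lower bound of Proposition\refer{wpgnsHs} are all routine; the only mild technical care needed is that $t^{-1/2}$ fails to be square integrable near $t=0$, which is why one invokes the regularity of the data on a small initial interval.
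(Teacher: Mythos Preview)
Your overall strategy matches the paper's: both invoke Koch--Tataru to produce a global solution $U$ with small $\cX$ norm, identify $u=U$ on $[0,T^\star(u_0))$ via uniqueness, and then argue propagation of regularity. The difference lies in that last step. The paper paralinearises, viewing $u$ as the solution of the \emph{linear} problem $\partial_t v-\Delta v+\cQ(u,v)=0$, and proves via Bony's decomposition that the associated Duhamel operator $PB(u,\cdot)$ has operator norm $\lesssim \|u\|_{\rm K}\eqdefa\sup_t t^{1/2}\|u(t)\|_{L^\infty}$ on a space $F_T$ built from $\ell^2$-summed $L^\infty_tL^2_x$ pieces together with $L^4_t\dot H^1$; smallness of $\|u\|_{\rm K}$ then makes $I-PB(u,\cdot)$ invertible and yields a uniform $L^4([0,T];\dot H^1)$ bound on $u$, which is a blow-up criterion. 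Your direct $\dot H^s$ energy/Gronwall route is more elementary and avoids the paraproduct machinery entirely; it also does not require smallness of $\|u\|_{\rm K}$ for the propagation itself, only finiteness of $\int_0^T\|u(t)\|_{L^\infty}^2\,dt$, at the price of the $t=0$ patch you rightly flag. One slip to correct: Proposition\refer{wpgnsHs} bounds the lifespan from below in terms of $\|\cdot\|_{\dot H^{\frac12+2\gamma}}$ for $\gamma\in\,]0,\frac12[$, not in terms of the critical $\dot H^{\frac12}$ norm (which gives no such quantitative bound); since your Gronwall controls every $\dot H^s$ with $s\ge0$, simply invoke it at some $s>\frac12$ to conclude.
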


Let us remark that 
$$
\|u_0\|_{\dot B^{-1}_{\infty,\infty}} \leq  \|u_0\|_{BMO^{-1}} \leq \|u_0\|_{\dot B^{-1}_{\infty,2}}.
$$

We shall explain in Section\refer {proofclassicrevisted} how to deduce Theorem~\ref{kochtatarusmooth} from the Koch and Tataru theorem\ccite {kochtataru}.

\medbreak

The previous results are valid for the whole class of systems~$(GNS)$. Now let us present the second main feature of the incompressible Navier-Stokes system, which is not 
shared by all systems under the form~$(GNS)$ as it relies on a special structure of the nonlinear term (which must be skew-symmetric in~$L^2$): the dissipation estimate for the kinetic energy. For regular solutions of~$(NS)$ there holds
$$
\frac 1 2 \frac d {dt} \|u(t)\|_{L^2}^2 +\|\nabla u(t)\|^2_{L^2} =0
$$
which gives by integration in time
\beq
\label {energydissipationfond}
\forall t \geq 0\, , \quad \cE  \big(u(t)\big )\eqdefa  \frac 12 \|u(t)\|_{L^2}^2 +\int_0^t\|\nabla u(t')\|_{L^2} dt'  = \frac 12 \|u_0\|_{L^2}^2\,.
\eeq
T. Tao pointed out in his paper\ccite {taoNSmodif}  that the energy estimate is not enough to prevent possible singularities from appearing. Our purpose  here is to investigate if this energy estimate can improve the lower bound~(\ref{lowerboundfpThm}) of the life span for regular initial data.
We recall indeed that for   smooth initial data, all Leray solutions --- meaning solutions in the sense of distributions satisfying the energy inequality
\beq
\label{energyinequality}
\cE  \big(u(t)\big ) \leq  \frac 12 \|u_0\|_{L^2}^2
\eeq
coincide with the smooth solution as long as the latter exists.

What we shall use  here is a rescaled version  of the energy dissipation inequality in the spirit of\ccite  {CPNS2011},  on the fluctuation~$w\eqdefa u-u_\L$ with~$u_{\L} (t)\eqdefa  e^{t\Delta }u_0$.
\begin{prop}
\label{rescaledenergy}
{\sl Let~$u$ be a regular solution of~$(NS)$ associated with some initial data~$u_0$. Then the fluctuation~$w$ satisfies, for any positive~$t$
$$
\cE \Big(
 \frac{ w(t)}{t^{\frac14}}
\Big) + \int_0^t  \frac { \|w(t')\|_{L^2}^2} { {t'}^{\frac 32}}\, dt' \lesssim  Q_{\L}^0  \, \exp \|u_0\|^ 2_{\dot B^{-1}_{\infty,2}}\,\with\, Q_{\L}^0  \eqdefa \,\int_0^\infty \,t^\frac12\| \PP (u_\L\cdot\nabla u_\L)(t)\|_{L^2}^2 \, dt
 \,.
$$
}
\end{prop}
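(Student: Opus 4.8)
The plan is to derive an energy identity for the fluctuation $w = u - u_\L$, rescale it appropriately, and estimate the resulting source terms using the smallness-free inequality $\|u_0\|_{\dot B^{-1}_{\infty,2}}$ together with a Gronwall argument. First I would write the equation satisfied by $w$: since $u$ solves $(NS)$ and $u_\L = e^{t\Delta}u_0$ solves the heat equation with the same data, $w$ satisfies
$$
\partial_t w - \Delta w = -\PP\dive\big((w+u_\L)\otimes(w+u_\L)\big)\,, \qquad w_{|t=0} = 0\,,
$$
with $\dive w = 0$. Taking the $L^2$ inner product with $w$, the quadratic term $\PP\dive(w\otimes w)$ is skew-symmetric and drops out (this is exactly the special structure of $(NS)$ that $(GNS)$ does not share), leaving
$$
\frac12\frac{d}{dt}\|w(t)\|_{L^2}^2 + \|\nabla w(t)\|_{L^2}^2 = -\big(\PP\dive(u_\L\otimes u_\L)\mid w\big)_{L^2} - \big(\PP\dive(w\otimes u_\L + u_\L\otimes w)\mid w\big)_{L^2}\,.
$$
The first term on the right is the genuine source $\PP(u_\L\cdot\nabla u_\L)$ and will feed $Q_\L^0$; the second is the "linear in $w$" term that has to be absorbed, and handling it is where the $\dot B^{-1}_{\infty,2}$ norm enters.

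Next I would perform the time rescaling dictated by the statement, namely work with $w(t)/t^{1/4}$. Equivalently, multiply the energy identity by an appropriate weight: the quantity $\cE(w(t)/t^{1/4})$ has the schematic form $\tfrac12 t^{-1/2}\|w(t)\|_{L^2}^2 + \int_0^t t'^{-1/2}\|\nabla w\|_{L^2}^2$, and differentiating $t^{-1/2}\|w\|_{L^2}^2$ produces, besides $2t^{-1/2}(\partial_t w\mid w)$, the extra term $-\tfrac12 t^{-3/2}\|w\|_{L^2}^2$ — which is precisely (up to the constant) the dissipation-type term $\int_0^t \|w(t')\|_{L^2}^2 t'^{-3/2}\,dt'$ appearing on the left-hand side of the Proposition. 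So the rescaling is exactly engineered so that the bad sign of $-\tfrac12 t^{-3/2}\|w\|_{L^2}^2$ works in our favour. After this manipulation one integrates in time from $0$ (where $w=0$, so no boundary contribution) to $t$.

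For the source term I would bound
$$
\Big|\int_0^t \frac{1}{t'^{1/2}}\big(\PP(u_\L\cdot\nabla u_\L)\mid w\big)_{L^2}\,dt'\Big| \le \int_0^t \frac{1}{t'^{3/4}}\|w(t')\|_{L^2}\cdot t'^{1/4}\|\PP(u_\L\cdot\nabla u_\L)(t')\|_{L^2}\,dt'
$$
and split the product by Young's inequality so that the first factor is absorbed into $\int_0^t t'^{-3/2}\|w\|_{L^2}^2\,dt'$ on the left, leaving $\int_0^\infty t'^{1/2}\|\PP(u_\L\cdot\nabla u_\L)(t')\|_{L^2}^2\,dt' = Q_\L^0$ on the right. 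The term $(\PP\dive(w\otimes u_\L + u_\L\otimes w)\mid w)_{L^2}$ I would estimate, after integrating by parts, by $\lesssim \|u_\L(t')\|_{L^\infty}\|w(t')\|_{L^2}\|\nabla w(t')\|_{L^2}$; then, with the rescaling weights in place, one distributes as $\big(t'^{-1/4}\|\nabla w\|_{L^2}\big)\big(t'^{-1/4}\|w\|_{L^2}\big)\cdot t'^{1/2}\|u_\L(t')\|_{L^\infty}/t'^{1/2}$ — Young's inequality sends the gradient factor into the dissipation $\int t'^{-1/2}\|\nabla w\|_{L^2}^2$ on the left, and one is left with a term of the form $\int_0^t t'^{-1}\|u_\L(t')\|_{L^\infty}^2\,\big(t'^{-1/2}\|w(t')\|_{L^2}^2\big)\,dt'$. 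Crucially, $\int_0^\infty t'^{-1}\|u_\L(t')\|_{L^\infty}^2\,dt' = \|e^{\cdot\Delta}u_0\|$ integrated against $dt'/t'$... wait, I must be careful: the scale-invariant weight is $\int_0^\infty \|e^{t'\Delta}u_0\|_{L^\infty}^2\,dt' = \|u_0\|_{\dot B^{-1}_{\infty,2}}^2$ without the $1/t'$, so the weight multiplying the $w$-energy should be taken as $\|u_\L(t')\|_{L^\infty}^2$ and one should track the powers of $t'$ to match $\dot B^{-1}_{\infty,2}$; a short scaling check (or a direct use of $\|e^{t'\Delta}u_0\|_{L^\infty}\lesssim t'^{-1/2}\|u_0\|_{\dot B^{-1}_{\infty,\infty}}$ to trade between the two Besov scales) confirms the integrable weight is exactly $\int_0^\infty \|e^{t'\Delta}u_0\|_{L^\infty}^2\,dt' = \|u_0\|_{\dot B^{-1}_{\infty,2}}^2 < \infty$. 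Gronwall's lemma applied to $y(t) \eqdefa \cE(w(t)/t^{1/4})$ then yields $y(t) \lesssim Q_\L^0\,\exp\big(C\|u_0\|_{\dot B^{-1}_{\infty,2}}^2\big)$, which is the claimed bound.

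The main obstacle I expect is the bookkeeping of the time weights: one must choose the rescaling power ($t^{1/4}$ on $w$, equivalently $t^{-1/2}$ on $\|w\|_{L^2}^2$) so that simultaneously (i) the $w_{|t=0}=0$ boundary term vanishes and no singular contribution at $t'=0$ appears, (ii) the spurious term from differentiating the weight has the right sign and is exactly the $\int t'^{-3/2}\|w\|_{L^2}^2$ term on the left, (iii) the source term recombines into the scale-invariant $Q_\L^0$, and (iv) the linear-in-$w$ term recombines into $\|u_0\|_{\dot B^{-1}_{\infty,2}}^2$ times the energy, so that Gronwall closes. These four requirements pin down the exponents uniquely, but verifying the near-$t'=0$ integrability (using that $w(t') = O(t')$ in $L^2$ for small times, since $w$ starts at $0$ with a bounded forcing) requires a little care. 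Everything else is a routine Cauchy–Schwarz / Young / Gronwall chain.
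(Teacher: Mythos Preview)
Your approach is essentially identical to the paper's: multiply the $L^2$ energy identity for $w$ by $t^{-1/2}$, observe that the extra term $\tfrac14 t^{-3/2}\|w\|_{L^2}^2$ has the good sign, split the source as $t^{1/4}\|\PP(u_\L\cdot\nabla u_\L)\|_{L^2}\cdot t^{-3/4}\|w\|_{L^2}$, bound the linear-in-$w$ term by $\|u_\L\|_{L^\infty}\|w\|_{L^2}\|\nabla w\|_{L^2}$ after an integration by parts, and close by Gronwall with weight $\|u_\L(t')\|_{L^\infty}^2$. Your momentary hesitation about an extra $t'^{-1}$ is just a bookkeeping slip---after Young the remaining term is $\|u_\L(t')\|_{L^\infty}^2\cdot\big(t'^{-1/2}\|w(t')\|_{L^2}^2\big)$ with no further power of $t'$, so the Gronwall factor is exactly $\exp\int_0^\infty\|e^{t'\Delta}u_0\|_{L^\infty}^2\,dt' = \exp\|u_0\|_{\dot B^{-1}_{\infty,2}}^2$.
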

Our main result is then the following.
\begin{theo}\label{theolifespan}
{\sl
There is a constant~$C>0$ such that the following holds. For any regular initial data of~$(NS)$, 
\begin{equation}
\label {lowerboundbetter}
T^*(u_0) > T_\L(u_0) \eqdefa C \big(Q_{\L}^0\big)^ {-2} \Big(
\|\partial_3u_0\|_{\dot B^{-\frac32}_{\infty,\infty}}^2 Q_{\L}^0+ \sqrt{Q_{\L}^0Q_{\L}^1}
\Big)^ {-2}  \exp \big(-4
\|u_0\|^ 2_{\dot B^{-1}_{\infty,2}}
\big)\,,
\end{equation}
with
$$
 Q_{\L}^1  \eqdefa \int_0^\infty t^\frac32\big\|\partial_3^2 \big(\PP (u_\L\cdot\nabla u_\L) \big)(t)\big\|_{L^2}^2 \, dt\,.
$$
}
\end{theo}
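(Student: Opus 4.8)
The plan is to combine the rescaled energy estimate of Proposition~\ref{rescaledenergy} with a bootstrap argument on the fluctuation~$w = u - u_\L$, exploiting the anisotropic structure that is hidden in the quantities~$Q_\L^0$ and~$Q_\L^1$. First I would write the equation satisfied by~$w$, namely
$$
\partial_t w - \Delta w + \PP\dive(w\otimes w) + \PP\dive(w\otimes u_\L + u_\L\otimes w) = -\PP\dive(u_\L\otimes u_\L)\,,\qquad w_{|t=0}=0\,,
$$
and look for a time~$T$ up to which~$w$ stays small in a suitable scale-invariant norm built on~$L^\infty_t L^2_x$ and its rescaled companion~$\|t^{-1/4}w\|$ together with the dissipation term~$\int_0^t {t'}^{-3/2}\|w(t')\|_{L^2}^2\,dt'$ controlled by Proposition~\ref{rescaledenergy}. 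The point of introducing~$\|\partial_3 u_0\|_{\dot B^{-3/2}_{\infty,\infty}}$ and~$Q_\L^1$ (which involves~$\partial_3^2$ of the source term) is that the vertical derivative~$\partial_3$ is the one that can be made small even when the full~$\dot H^{1/2}$ or~$\dot B^{-1}_{\infty,\infty}$ norm of~$u_0$ is large; so the estimates must be carried out tracking~$\partial_3$ separately, presumably using~$\dive u = 0$ to express~$\partial_3 u^3 = -\partial_1 u^1 - \partial_2 u^2$ and an anisotropic Bernstein / product inequality in the vertical variable.

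The key steps, in order, would be: (i) record the linear smoothing bounds for~$u_\L = e^{t\Delta}u_0$ in terms of~$\|u_0\|_{\dot B^{-1}_{\infty,2}}$ and~$\|u_0\|_{\dot B^{-1}_{\infty,\infty}}$, and the corresponding bound for~$\partial_3 u_\L$ in terms of~$\|\partial_3 u_0\|_{\dot B^{-3/2}_{\infty,\infty}}$; (ii) feed these into~$Q_\L^0$ and~$Q_\L^1$ and, via Proposition~\ref{rescaledenergy}, obtain an a priori bound
$$
\|t^{-1/4}w(t)\|_{L^2}^2 + \int_0^t \frac{\|w(t')\|_{L^2}^2}{{t'}^{3/2}}\,dt' \lesssim Q_\L^0\,\exp\|u_0\|_{\dot B^{-1}_{\infty,2}}^2\,;
$$
(iii) use this energy-type control to estimate the various terms obtained by testing the~$w$-equation against an appropriate functional — the quadratic term~$\PP\dive(w\otimes w)$, the cross terms with~$u_\L$, and the source — getting a closed inequality of the form (roughly) $N(T) \le A\,T^{a} + B\,T^{b}\,N(T) + C\,N(T)^2$ where~$N(T)$ is the chosen norm on~$[0,T]$ and~$A,B,C$ are built from~$Q_\L^0, Q_\L^1, \|\partial_3 u_0\|_{\dot B^{-3/2}_{\infty,\infty}}$ and~$\exp\|u_0\|_{\dot B^{-1}_{\infty,2}}^2$; (iv) run a continuity/bootstrap argument to see that~$N(T)$ stays bounded, hence~$w$ (and so~$u = w + u_\L$) stays regular, as long as~$T$ is below the threshold~$T_\L(u_0)$ in~(\ref{lowerboundbetter}); (v) reconcile the exponents and constants so that the final lower bound takes exactly the stated form, in particular so that the combination $\big(\|\partial_3 u_0\|_{\dot B^{-3/2}_{\infty,\infty}}^2 Q_\L^0 + \sqrt{Q_\L^0 Q_\L^1}\big)$ appears squared and times~$(Q_\L^0)^{-2}$.

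I expect the main obstacle to be step~(iii): closing the nonlinear estimate while only paying for~$\partial_3$-type norms of the data. The cross term~$\PP\dive(u_\L\otimes w)$ is the delicate one — one must integrate by parts to move a derivative onto~$u_\L$, then split according to whether that derivative is horizontal or vertical, and for the horizontal part recover the vertical derivative through the divergence-free condition; controlling the resulting anisotropic products in~$L^2$ without generating a full (non-vertical) norm of~$u_\L$ requires a careful Gronwall-type argument where the "bad" horizontal direction is absorbed by the~$\exp\|u_0\|_{\dot B^{-1}_{\infty,2}}^2$ factor (exactly as in Proposition~\ref{rescaledenergy}), while the "good" vertical direction produces the genuinely small factors. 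Getting the time powers to match — so that the source term contributes~$T$ to a power that combines correctly with the dissipation gain~${t'}^{-3/2}$ from the rescaled energy — will also need some bookkeeping, but it is routine once the structure of step~(iii) is fixed.
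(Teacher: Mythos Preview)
Your outline has the right architecture --- fluctuation equation, rescaled energy, anisotropic estimates, bootstrap --- but the crucial concrete choice of the bootstrap quantity~$N(T)$ is missing, and without it step~(iii) does not close. In the paper the norm one bootstraps is \emph{not} a scale-invariant norm of~$w$ itself but the ordinary energy of~$\partial_3 w$, namely
\[
N(T)=\sup_{t\in[0,T]}\|\partial_3 w(t)\|_{L^2}^2+\int_0^T\|\nabla\partial_3 w(t)\|_{L^2}^2\,dt\,.
\]
One differentiates~(NSF) by~$\partial_3$ and does a straight $L^2$ energy estimate. The three ``bad'' terms (those not containing~$\partial_3 w$ quadratically) are $-(\partial_3 u_\L\cdot\nabla w\,|\,\partial_3 w)$, $-(w\cdot\nabla\partial_3 u_\L\,|\,\partial_3 w)$ and $-(\partial_3(u_\L\cdot\nabla u_\L)\,|\,\partial_3 w)$. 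The first two are integrated by parts to move the gradient onto~$\partial_3 w$, then bounded by $\|\partial_3 u_\L(t)\|_{L^\infty}\|w(t)\|_{L^2}\|\nabla\partial_3 w(t)\|_{L^2}$; writing $\|\partial_3 u_\L(t)\|_{L^\infty}\lesssim t^{-3/4}\|\partial_3 u_0\|_{\dot B^{-3/2}_{\infty,\infty}}$ and pairing the remaining factor $t^{-3/4}\|w(t)\|_{L^2}$ with the dissipation term $\int t^{-3/2}\|w\|_{L^2}^2$ from Proposition~\ref{rescaledenergy} is exactly how $\|\partial_3 u_0\|_{\dot B^{-3/2}_{\infty,\infty}}^2 Q_\L^0$ appears. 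The third term is integrated by parts twice to become $(\partial_3^2\PP(u_\L\cdot\nabla u_\L)\,|\,w)$, and then Cauchy--Schwarz with the weight~$t^{3/2}$ produces $\sqrt{Q_\L^0 Q_\L^1}$. Finally the genuinely nonlinear term $-(\partial_3 w\cdot\nabla w\,|\,\partial_3 w)$ is handled by the anisotropic inequalities $\|a\|_{L^2_{\rm v}L^4_{\rm h}}\lesssim\|a\|_{L^2}^{1/2}\|\nabla_{\rm h}a\|_{L^2}^{1/2}$ and $\|a\|_{L^\infty_{\rm v}L^2_{\rm h}}\lesssim\|a\|_{L^2}^{1/2}\|\partial_3 a\|_{L^2}^{1/2}$; this gives the contribution $T^{1/2}Q_\L^0\,N(T)^2$, which is the sole source of the~$T$-dependence and fixes the exponent in~\eqref{lowerboundbetter}. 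Your guess that the cross term $\PP\dive(u_\L\otimes w)$ is the delicate one, and your proposed use of $\dive u=0$ to trade $\partial_3 u^3$ for horizontal derivatives, do not actually enter here: after the $\partial_3$-differentiation all the cross terms are benign (absorbed by Gronwall and the $\exp\|u_0\|_{\dot B^{-1}_{\infty,2}}^2$ factor), and the divergence-free condition is only used in the usual way to kill transport terms.

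There is also a second step you do not mention: control of $\partial_3 w$ in $L^\infty_tL^2$ is \emph{not} by itself a blow-up criterion. One must upgrade it to control of~$\nabla u$ in~$L^\infty_tL^2$. The paper does this by an anisotropic $\dot H^1$ energy estimate on~$u$ (not on~$w$): differentiate~$(NS)$ horizontally, use $\|u\|_{L^\infty_{\rm v}L^4_{\rm h}}^2\lesssim\|\partial_3 u\|_{L^2}\|\nabla_{\rm h}u\|_{L^2}$, and close by Gronwall using the Leray energy bound on $\int\|\nabla u\|_{L^2}^2$ together with the just-obtained bound on $\sup_t\|\partial_3 u\|_{L^2}$. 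Without this final conversion your argument, even with the right~$N(T)$, would not conclude that $T^\star(u_0)>T_\L(u_0)$.
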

The main two features of this result are that
\begin{itemize}
\item the statement involves non linear quantities associated with the initial data, namely norms of~$\PP (u_\L\cdot\nabla u_\L)$;
\item one particular (arbitrary) direction plays a specific role.
\end{itemize}
This theorem is proved in Section~\ref{prooftheolifespan}.

\medbreak
The following statement shows that  the lower bound on~$T^*(u_0)$ given in Theorem~\ref{theolifespan} is, for some classes of initial data, a significant improvement.

   \begin{theo}\label{theoexample}
{\sl Let~$(\gamma,\eta)$ be in~$]0,1/2[\times ]0,1[$.  There is a constant~$C$ and a family~$(u_{0,\varepsilon})_{\varepsilon\in ]0,1[}$ of regular initial data  such that 
with the notation of  Theorems~{\rm\ref{wpgnsBesov}} and~{\rm\ref{theolifespan}},  
 $$
  T_{\rm FP} (u_{0,\varepsilon})= C   \varepsilon^{2 }  |\log \e|^{-\frac1\gamma}\quad \mbox{and} \quad T_\L (u_{0,\varepsilon}) \geq   C \varepsilon^{-2+\eta} \, .
$$
}
\end{theo}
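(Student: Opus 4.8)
The plan is to exhibit a family which is large in~$\dot B^{-1+2\gamma}_{\infty,\infty}$ (so that the fixed point lower bound~$T_{\rm FP}$ is small) while being so close to a shear flow that the nonlinear heat quantities~$Q_\L^0$,~$Q_\L^1$ are tiny (so that~$T_\L$ is large). Fix a nontrivial~$\Phi\in\cS(\R^2)$ and~$\chi\in C_c^\infty(\R)$, fix~$\beta>1$ close to~$1$ and~$d>0$ to be adjusted at the end, and set, with~$\nabla_h^\perp\eqdefa(\partial_2,-\partial_1)$,
\[
u_{0,\e}(x)\eqdefa a_\e\,\chi(x_3)\cos(\e^{-\beta}x_3)\,\bigl(\nabla_h^\perp[\Phi(\e^d x_1,x_2)],\,0\bigr),
\]
the amplitude~$a_\e>0$ being chosen so that~$\|u_{0,\e}\|_{\dot B^{-1+2\gamma}_{\infty,\infty}}=\e^{-2\gamma}|\log\e|$ exactly; this is possible because that norm is a linear bijection of~$a_\e$ onto~$]0,\infty[$, and since the Fourier support of~$u_{0,\e}$ is contained in~$\{|\xi|\simeq\e^{-\beta}\}$ one finds~$\|u_{0,\e}\|_{\dot B^{-1+2\gamma}_{\infty,\infty}}\simeq\e^{\beta(1-2\gamma)}\|u_{0,\e}\|_{L^\infty}\simeq\e^{\beta(1-2\gamma)}a_\e$, hence~$a_\e\simeq\e^{-2\gamma-\beta(1-2\gamma)}|\log\e|$. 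The field~$u_{0,\e}$ is regular (Schwartz in~$x$) and divergence free, since its third component vanishes and its horizontal part is a two–dimensional curl — note that the cut–off~$\chi(x_3)$ does not break the divergence–free condition for that very reason. By definition of~$T_{\rm FP}$ this already yields~$T_{\rm FP}(u_{0,\e})=c'_\gamma(\e^{-2\gamma}|\log\e|)^{-1/\gamma}=c'_\gamma\,\e^2|\log\e|^{-1/\gamma}$, the first identity, with~$C=c'_\gamma$.

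The heart of the matter is the~$L^2$ estimate of~$\PP(u_\L\cdot\nabla u_\L)$, where two structural facts are used. First, since~$u_{0,\e}^3\equiv0$ we have~$u_\L^3\equiv0$, hence~$u_\L\cdot\nabla u_\L=u_\L^h\cdot\nabla_h u_\L$ involves no~$\partial_3$ derivative, so no factor~$\e^{-\beta}$ is produced by differentiation. Second, writing~$e^{t\Delta}=e^{t\partial_3^2}e^{t\Delta_h}$, the~$x_3$ factor~$e^{t\partial_3^2}(\chi\cos(\e^{-\beta}\cdot))$ has~$L^\infty$ and~$L^2$ norms~$\lesssim e^{-ct\e^{-2\beta}}$ (heat damping at frequency~$\e^{-\beta}$), while~$e^{t\Delta_h}$ maps a function of~$(\e^d x_1,x_2)$ to a function of~$(\e^d x_1,x_2)$ with Schwartz seminorms uniformly bounded for~$t$ in bounded sets; consequently the horizontal nonlinear term is~$O(\e^d)$ pointwise, hence~$O(\e^{d/2})$ in~$L^2_{x_h}$ (the slow variable costs a factor~$\e^{-d/2}$ in~$L^2$). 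Splitting~$\cos^2$ into its mean and a part oscillating at frequency~$2\e^{-\beta}$, using only the boundedness of~$\PP$ on~$L^2$, we get uniformly in~$t>0$
\[
\|\PP(u_\L\cdot\nabla u_\L)(t)\|_{L^2}\lesssim a_\e^2\,\e^{d/2}\,e^{-ct\e^{-2\beta}},\qquad
\|\partial_3^2\PP(u_\L\cdot\nabla u_\L)(t)\|_{L^2}\lesssim a_\e^2\,\e^{-2\beta}\,\e^{d/2}\,e^{-ct\e^{-2\beta}},
\]
the factor~$\e^{-2\beta}$ coming from~$\partial_3^2$ falling on the~$2\e^{-\beta}$–oscillating part. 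Integrating in time gives~$Q_\L^0\lesssim a_\e^4\,\e^{d}\,\e^{3\beta}$ and~$Q_\L^1\lesssim a_\e^4\,\e^{d}\,\e^{\beta}$, while directly~$\|\partial_3 u_{0,\e}\|_{\dot B^{-3/2}_{\infty,\infty}}\lesssim a_\e\,\e^{\beta/2}$ and~$\|u_{0,\e}\|_{\dot B^{-1}_{\infty,2}}\lesssim a_\e\,\e^{\beta}$.

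It remains to feed these into the formula defining~$T_\L$, which is decreasing in each of~$Q_\L^0$, $Q_\L^1$, $\|\partial_3u_0\|_{\dot B^{-3/2}_{\infty,\infty}}$ and~$\|u_0\|_{\dot B^{-1}_{\infty,2}}$, so upper bounds on them give a lower bound on~$T_\L$. With~$a_\e\simeq\e^{-2\gamma-\beta(1-2\gamma)}|\log\e|$ one has~$\|u_{0,\e}\|_{\dot B^{-1}_{\infty,2}}^2\lesssim a_\e^2\e^{2\beta}\simeq\e^{4\gamma(\beta-1)}|\log\e|^2\to0$ — this is where~$\beta>1$ is used — so~$\exp(-4\|u_{0,\e}\|^2_{\dot B^{-1}_{\infty,2}})\ge\tfrac12$ for~$\e$ small; the same quantity~$a_\e^2\e^{2\beta}\to0$ forces~$\|\partial_3u_0\|^2_{\dot B^{-3/2}_{\infty,\infty}}Q_\L^0\lesssim\sqrt{Q_\L^0Q_\L^1}$, so the bracket in~$T_\L$ is~$\lesssim a_\e^4\e^{d}\e^{2\beta}$. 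Therefore
\[
T_\L(u_{0,\e})\gtrsim\bigl(a_\e^4\e^{d}\e^{3\beta}\bigr)^{-2}\bigl(a_\e^4\e^{d}\e^{2\beta}\bigr)^{-2}\simeq a_\e^{-16}\e^{-4d}\e^{-10\beta}\simeq\e^{16(2\gamma+\beta(1-2\gamma))-4d-10\beta}\,|\log\e|^{-16}.
\]
The exponent equals~$6\beta-32\gamma(\beta-1)-4d$, which at~$\beta=1$ is exactly~$6-4d$, independently of~$\gamma$ (the amplitude normalisation has made the construction scale critical); taking~$\beta>1$ close to~$1$ and~$d$ close to (and slightly above)~$2-\tfrac\eta4$ makes this exponent~$<-2+\eta$, and since~$|\log\e|^{-16}$ is dominated by any positive power of~$1/\e$ we get~$T_\L(u_{0,\e})\ge C\e^{-2+\eta}$ for~$\e$ small, hence for all~$\e\in\,]0,1[$ after decreasing~$C$. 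Note~$d>0$ is automatic since~$\eta<1$.

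The main obstacle is justifying the~$L^2$ bounds on~$\PP(u_\L\cdot\nabla u_\L)$ above: one must prove rigorously that the heat flow does not spoil the ``slowly varying in~$x_1$'' structure of the profile over the short interval~$[0,\e^{2\beta}]$ on which the nonlinear term has not yet decayed, so that the gain~$\e^{d/2}$ is genuine and uniform in time — this is the ``slowly varying in one direction'' mechanism, here combined with the fast~$x_3$ oscillation. By contrast no cancellation inside~$\PP$ is needed (its~$L^2$ boundedness, with constant independent of~$\e$, suffices, all the smallness coming from the profile), and the treatment of the two pieces of~$\cos^2(\e^{-\beta}x_3)$ is elementary. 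The remaining step is the bookkeeping of exponents in the last display, which is routine once one notices that the~$\gamma$–dependence cancels at~$\beta=1$ and that the surplus is controlled by the free parameter~$d$.
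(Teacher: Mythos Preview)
Your construction is correct and proves the theorem as stated, but through a genuinely different family and mechanism from the paper's. The paper takes data oscillating in~$x_1$ at frequency~$\e^{-1}$ and slowly varying in~$x_2$ at scale~$\e^\alpha$ ($0<\alpha<1$), with amplitude~$A_\e/\e$ where~$A_\e\sim|\log\e|^{1/2}$; smallness of~$Q_\L^0,Q_\L^1$ is then read off from the Besov bound~$\|f_\e\|_{\dot B^{-3/4}_{4,4}}\lesssim\e^{(3+\alpha)/4}$ quoted from~\cite{cg2}, and~$\|u_{0,\e}\|_{\dot B^{-1}_{\infty,2}}\sim A_\e$ diverges, so the exponential in~$T_\L$ costs a small power of~$\e$. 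You instead oscillate in~$x_3$ at frequency~$\e^{-\beta}$ with~$\beta>1$, vary slowly in~$x_1$, and force~$u^3\equiv0$; since then~$u_\L\cdot\nabla u_\L=u_\L^h\cdot\nabla_h u_\L$ contains only horizontal derivatives, the tensor-product structure yields the~$L^2$ bounds on the nonlinear term by elementary means, with no Besov machinery. The price is that~$\|u_{0,\e}\|_{\dot B^{-1}_{\infty,2}}\simeq a_\e\e^\beta\simeq\e^{2\gamma(\beta-1)}|\log\e|\to0$: your data is small in~$BMO^{-1}$, hence the associated solution is already global by Koch--Tataru (Theorem~\ref{kochtatarusmooth}). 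This is precisely what the paper's construction avoids --- see the remark following Theorem~\ref{theoexample} --- and it is what makes the paper's example more informative: it exhibits~$T_\L\gg T_{\rm FP}$ on data for which no prior result gives global existence. Your example still establishes the literal statement, since~$T_\L$ is a number computed from~$u_0$ irrespective of what else is known about~$T^\star$.

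One technical caveat: with~$\chi\in C_c^\infty$ the function~$V(x_3)=\chi(x_3)\cos(\e^{-\beta}x_3)$ is not band-limited, so the pointwise bound~$\|e^{t\partial_3^2}V\|_{L^p}\lesssim e^{-ct\e^{-2\beta}}$ fails by an~$O_N(\e^{N\beta})$ Schwartz tail for each~$N$. This tail does not integrate to zero against~$t^{1/2}\,dt$ on its own, so to recover~$Q_\L^0\lesssim a_\e^4\e^{d+3\beta}$ you must also invoke the horizontal heat decay~$\|e^{t\Delta_h}W\cdot\nabla_h e^{t\Delta_h}W\|_{L^2_h}\lesssim t^{-3/2}\e^{-3d/2}$ for large~$t$; alternatively, and more simply, take~$\chi\in\cS(\R)$ with~$\hat\chi$ compactly supported.
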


This theorem is proved in Section~\ref{example}. The family~$(u_{0,\varepsilon})_{\varepsilon\in ]0,1[}$ is closely related to the family used in\ccite {cg2} to exhibit families of initial data which do not obey   the hypothesis of the Koch and Tataru theorem and which nevertheless generate global smooth solutions. However it it too large to   satisfy the assumptions of Theorem~2 in\ccite {cg2} so it is not known if the associate solution is global.

\medskip

In the following we shall denote by~$C$ a constant which may change from line to line, and we shall sometimes write~$A \lesssim B$ for~$A \leq CB$.
\section{Proof of Theorem~\refer{wpgnsBesov}}
\label {proofclassicrevisted}
Let~$u_0$ be a smooth vector field
and let us solve~$(GNS)$ by means of a fixed point method. We define the bilinear operator~$B$ by
\beq
\label {wpgnsBesovdemoeq0}
\begin{aligned}
\partial_t B(u,v) -\D B(u,v) = -\frac 12 \bigl(Q(u,v)+Q(v,u) \bigr)\, ,  \quad  \mbox{and}\   B(u,v)|_{t=0}=0\,.
\end{aligned}
\eeq
One can decompose the solution~$u$ to~$(GNS)$ into
$$
u = u_{\L} + B(u,u)\,.
$$
Resorting to the Littlewood-Paley decomposition defined in the Appendix, let us define for any real number~$\gamma$ and any time~$T>0$, the quantity
$$
\|f\|_{E^\gamma_T} \eqdefa \sup_{j\in {\mathbb Z}} 2^{-j(1-2\gamma)} \bigl(\|\D_jf\|_{L^\infty([0,T]\times \R^3)} +2^{2j} \|\D_jf\|_{L^1([0,T]; L^\infty(\R^3))}\bigr)\, .
$$
Using Lemma 2.1 of\ccite {chemin20} it is easy to see that
$$
\|u_\L\|_{E^\g_\infty} \lesssim \|u_0\|_{\dot B^{-1+2\gamma}_{\infty,\infty}}\, ,
$$
so Theorem~\ref{wpgnsBesov} will follow from the fact that~$B$ maps~$E^\gamma_T\times E^\gamma_T$ into~$E^\gamma_T$ with   the following estimate:
\beq
\label{lifespanNSB-1+demoeq1}
\|B(u,v)\|_{E^\g_T} \leq C_\g T^{  \g} \|u\|_{E^\g_T} \|v\|_{E^\g_T} \, .
\eeq
So let us prove~(\ref{lifespanNSB-1+demoeq1}).
Using again Lemma 2.1 of\ccite {chemin20} along with the fact that the~$A^i_{k,\ell} (D) $ are smooth homogeneous Fourier multipliers of order~$1$, we have 
$$
\|\D_j B(u,v) (t) \|_{L^\infty} \lesssim  \int_0^t  e^{-c2^{2j(t-t')} } 2^j \Bigl\|\D_j \big(u(t')\otimes v(t')+v(t')\otimes u(t')\Big)\|_{L^\infty} dt'\,.
$$
We then decompose (component-wise) the product~$u\otimes v$ following Bony's paraproduct algorithm: for all functions~$a$ and~$b$
the support of the Fourier transform  of~$S_{j'+1} a\D_{j'} b$ and~$S_{j'} b\D_{j'} a$ is included in a ball~$2^{j'} B$ where~$B$ is a fixed ball of~$\R^3$,
so one can write for some fixed constant~$c>0$
$$
ab = \sum_{2^{j'} \geq c2^ j} \big( S_{j'+1}a \Delta_{j'}b +\Delta_{j'} a S_{j'} b \big)
$$
so thanks to Young's inequality in time one can write
\beq
\label{lifespanNSB-1+demoeq2}
\begin{split}
&2^{-j(1-2\gamma)} \bigl(\|\D_jB(u,v)\|_{L^\infty([0,T]\times \R^3)} +2^{2j} \|\D_jB(u,v)\|_{L^1([0,T]; L^\infty(\R^3))}\bigr)\\
&\qquad\qquad\qquad\qquad\qquad\qquad\qquad\qquad\qquad\qquad\qquad{}\lesssim \cB^1_j(u,v)+\cB^2_j(u,v)\with \\
& \cB^1_j(u,v) \eqdefa 2^{2j\g}\! \! \sum_{2^{j'} \geq \max \{c2^j,T^{-\frac 12}\}} \! \! \|S_{j'+1}u\|_{L^\infty([0,T]\times \R^3)}
\|\D_{j'}v\|_{L^1([0,T]; L^\infty(\R^3))}\\
&\qquad\qquad\qquad\qquad
{}+2^{2j\g} \! \! \sum_{c2^j \leq2^{j'} <T^{-\frac 12}}\! \!  \|S_{j'+1}u\|_{L^\infty([0,T]\times \R^3)}
\|\D_{j'}v\|_{L^1([0,T]; L^\infty(\R^3))}\andf\\
& \cB^2_j(u,v) \eqdefa 2^{2j\g}\! \!  \sum_{2^{j'} \geq \max \{c2^j,T^{-\frac 12}\}} \! \! \|S_{j'}v\|_{L^\infty([0,T]\times \R^3)}
\|\D_{j'}u\|_{L^1([0,T]; L^\infty(\R^3))}\\
&\qquad\qquad\qquad\qquad
{}+2^{2j\g}  \! \! \sum_{c2^j \leq2^{j'} <T^{-\frac 12}} \! \! \|S_{j'}v\|_{L^\infty([0,T]\times \R^3)}
\|\D_{j'}u\|_{L^1([0,T]; L^\infty(\R^3))} \, .
\end{split}
\eeq
In each of the sums over~$c2^j \leq2^{j'} <T^{-\frac 12}$ we write
$$
\|f\|_{L^1([0,T]; L^\infty(\R^3))} \leq T \|f \|_{L^\infty([0,T]\times \R^3)}
$$
and we can estimate the two terms~$\cB^1_j(u,v)$ and~$\cB_j^2(u,v)$  in the same way: for~$\ell\in \{1,2\}$ there holds indeed
\beno
\cB_j^\ell(u,v) & \leq & \|u\|_{E^\g_T}\|v\|_{E_T^\g} \Bigl(2^{2j\gamma} \! \!   \sum_{2^{j'} \geq \max \{c2^{j},T^{-\frac 12}\}} \! \!  2^{-4j'\g}
+ T 2^{2j(1-\g)}\sum_{c\leq2^{j'-j} <(2^{2j}T)^{-\frac 12}} 2^{2(j'-j)(1-2\g)} \Bigr)\\
 &\leq &  \|u\|_{E^\g_T}\|v\|_{E_T^\g} \Bigl( T^\g
+ T 2^{2j(1-\g)}\! \! \sum_{c\leq2^{j'-j} <(2^{2j}T)^{-\frac 12}}\! \!  2^{2(j'-j)(1-2\g)} \Bigr)\,.
\eeno
Once noticed that
$$
T 2^{2j(1-\g)}\! \!\sum_{c\leq2^{j'-j} <(2^{2j}T)^{-\frac 12}} \! \!2^{2(j'-j)(1-2\g)}  \leq {\bf 1}_{\{2^{2j}T\leq C\}}  (T2^{2j}) ^\g 2^{-2j\g} \lesssim T^\g
$$
the estimate~(\ref{lifespanNSB-1+demoeq1})   is proved  and   Theorem~\refer{wpgnsBesov} follows.
\qed

\section {Proof of Theorem\refer {kochtatarusmooth} }
As the solutions given by the Fujita-Kato theorem~\cite{fujitakato} and  the  Koch-Tataru theorem~\cite{kochtataru}  are unique in their own class, they are unique in the intersection and thus coincide as long as the Fujita-Kato solution exists.  Thus  Theorem\refer {kochtatarusmooth}  is a question of propagation of regularity, which is provided by the following lemma (which proves the theorem).
\begin{lemme}
\label {LemmaFK--KT}
{\sl
 A constant~$c_0$ exists which satisfies the following.
Let~$u$ be a regular solution of~$(GNS)$ on~$[0,T [$ associated with a regular initial data~$u_0$ such that 
$$
\|u\|_{\rm K}\eqdefa \sup_{t\in [0,  T[} t^{\frac 12} \|u(t)\|_{L^\infty} \leq c_0\,.
$$
Then~$T^\star(u_0)>  T$.
}
\end{lemme}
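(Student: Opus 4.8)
The plan is to run a standard continuation/bootstrap argument: we show that under the smallness hypothesis $\|u\|_{\rm K}\le c_0$, the regular solution $u$ on $[0,T[$ actually stays bounded in every Sobolev space $\dot H^s$ up to time $T$, hence extends past $T$ by the local existence result of Proposition~\ref{wpgnsHs}. The point of the Koch--Tataru-type control $\sup_{t<T}t^{1/2}\|u(t)\|_{L^\infty}\le c_0$ is that it is a scaling-invariant quantity small enough to close energy-type estimates at the critical and supercritical Sobolev levels.

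First I would write $u$ via the Duhamel formula $u(t)=e^{t\Delta}u_0-\int_0^t e^{(t-t')\Delta}\PP\dive(u\otimes u)(t')\,dt'$ (in the $(GNS)$ case, with the Fourier multipliers $A^i_{j,k}(D)$ of order $1$ in place of $\PP\dive$; this changes nothing). Fix $s>1/2$. Applying $\dot H^s$ estimates for the heat semigroup and the standard product law $\|fg\|_{\dot H^{s-1/2}}\lesssim \|f\|_{L^\infty}\|g\|_{\dot H^{s-1/2}}+\|f\|_{\dot H^{s-1/2}}\|g\|_{L^\infty}$ combined with the smoothing $\|e^{t\Delta}\|_{\dot H^{s-1}\to\dot H^s}\lesssim t^{-1/2}$, one gets for $0<t<T$
$$
\|u(t)\|_{\dot H^s}\lesssim \|e^{t\Delta}u_0\|_{\dot H^s}+\int_0^t (t-t')^{-\frac12}\|u(t')\|_{L^\infty}\|u(t')\|_{\dot H^s}\,dt'\,.
$$
Since $\|u(t')\|_{L^\infty}\le c_0 {t'}^{-1/2}$, the kernel $(t-t')^{-1/2}{t'}^{-1/2}$ is integrable with $\int_0^t(t-t')^{-1/2}{t'}^{-1/2}dt'=\pi$, so a singular Gronwall lemma (Grönwall with a weakly singular kernel, e.g. the generalized Gronwall inequality of Henry) yields, provided $c_0\pi$ is small enough,
$$
\sup_{t\in[0,T[}\|u(t)\|_{\dot H^s}\lesssim \sup_{t\ge0}\|e^{t\Delta}u_0\|_{\dot H^s}\le \|u_0\|_{\dot H^s}<\infty\,.
$$
(Here one may first run the argument at $s=\tfrac12$, where $\sup_t\|e^{t\Delta}u_0\|_{\dot H^{1/2}}=\|u_0\|_{\dot H^{1/2}}$, and then propagate to all higher $s$ by induction, at each stage using the already-established bound at the previous level to handle the low-frequency part of the product; the high regularity of $u_0$ guarantees each $\|u_0\|_{\dot H^s}$ is finite.)

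Once we know $u\in L^\infty([0,T[;\dot H^s)$ for every $s\ge 1/2$, in particular $u(t)$ does not blow up in any Sobolev norm as $t\to T^-$, so $\lim_{t\to T^-}u(t)$ exists in every $\dot H^s$ and defines a regular vector field; applying Proposition~\ref{wpgnsHs} with this as new initial data produces a regular solution on $[T,T+\delta[$ for some $\delta>0$, which by uniqueness agrees with $u$ and extends it. Hence $T^\star(u_0)\ge T+\delta>T$, as claimed. The constant $c_0$ is determined by the implicit constants in the product law and the value $\pi$ of the Beta-type integral, and can be taken uniform in $s$ after the first inductive step. The main obstacle is purely technical bookkeeping in the induction on $s$: one must be careful that the low-frequency portion of the bilinear term at level $s$ is genuinely controlled by a \emph{lower} Sobolev norm (already bounded) rather than by $\dot H^s$ itself, so that the only genuinely self-referential estimate is the one closed by the small factor $c_0$; apart from that, everything is the classical persistence-of-regularity machinery.
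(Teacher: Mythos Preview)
Your approach is correct and takes a genuinely different route from the paper's. The paper proceeds by \emph{paralinearization}: it rewrites $(GNS)$ as the linear problem $v=u_\L+PB(u,v)$, where $PB$ is built from the paraproduct operator~$\cQ(u,\cdot)$, and works in the space~$F_T$ whose norm combines the Chemin--Lerner quantity $\bigl(\sum_j 2^j\|\Delta_j v\|_{L^\infty_t L^2}^2\bigr)^{1/2}$ with $\|v\|_{L^4_t\dot H^1}$. The key estimate $\|PB(u,v)\|_{F_T}\lesssim \|u\|_{\rm K}\|v\|_{F_T}$ is obtained by Littlewood--Paley analysis of $\Delta_j\cQ(u,v)$, and the conclusion follows from the $L^4_t\dot H^1$ blow-up criterion. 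Your route avoids paraproducts entirely: the Duhamel formula, the product law in $\dot H^s$, and the Beta integral $\int_0^t(t-t')^{-1/2}{t'}^{-1/2}\,dt'=\pi$ close a direct bootstrap.

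Two small points. First, the index in the product law you quote should be~$\dot H^s$ rather than~$\dot H^{s-1/2}$: the order-$1$ multiplier $A(D)$ costs a full derivative, absorbed by the $\dot H^{s-1}\to\dot H^s$ heat smoothing, and the displayed inequality you arrive at is then correct. Second, the induction on $s$ you sketch is both unnecessary and delicate, since the product-law constant grows with $s$ and a single $c_0$ cannot close the same estimate at every level. It is cleaner to run your argument at one fixed supercritical index, say $s=\tfrac12+2\gamma$ with $\gamma\in\,]0,1/2[$, and then invoke Proposition~\ref{wpgnsHs} at times $t\to T^-$: since $\sup_{t<T}\|u(t)\|_{\dot H^{1/2+2\gamma}}$ is finite, the remaining lifespan from $u(t)$ stays bounded below, hence $T^\star(u_0)>T$. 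What your argument buys is simplicity and no Littlewood--Paley machinery; what the paper's paralinearization buys is a bound on $\|u\|_{L^4_t\dot H^1}$ purely in terms of~$\|u_0\|_{\dot H^{1/2}}$, with no supercritical norm of the data appearing.
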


\begin{proof}
The proof is based on a paralinearization argument (see\ccite {chemin20}).  Observe that for any~$T$ less than~$T^\star(u_0)$,~$u$  is a solution on~$[0,T[$ of the  {\it linear} equation
$$
\displaylines{
(PGNS)\quad \left\{
\begin{array}{c}
\ds \partial_t v  -\D v +\cQ (u,v)  =  0\\
 v_{|t=0} =u_0 
\end{array}
\right. \with  \cr 
\cQ (u,v)\eqdefa  \sum_{j\in \ZZ} Q(S_{j+1} u,\D_jv)+ \sum_{j\in \ZZ} Q(\D_j v,S_ju)
\,.}
$$
In the same spirit as\refeq {wpgnsBesovdemoeq0}, let us define~$PB(u,v)$ by
\beq
\label {kochtatarusmoothdemoeq1}
\partial_t PB(u,v) -\D PB(u,v) =-\cQ(u,v)  \, \quad \mbox{and}\quad PB(u,v)|_{t=0}=0\,.
\eeq
A solution  of~$(PGNS)$ is a solution of
$$
v = u_\L +PB(u,v)\,.
$$
Let us introduce the space~$F_T$ of continuous functions with values in~$\dot H^{\frac 12}$, which are elements of~$L^4([0,T];\dot H^1)$, equipped with the norm
$$
\|v\|_{F_T} \eqdefa \biggl( \sum_{j\in \ZZ}  2^{j} \| \D_j v\|^2_{L^\infty([0,T[;L^2)} \biggr)^{\frac 12} +
\|v\|_{L^4([0,T[; \dot H^1)}\,.
$$
Notice that the first part of the norm was   introduced in\ccite {chemin13} and is a larger norm than the supremum in time of the~$\dot H^{\frac 12}$ norm. Moreover there holds
$$
\| u_\L\|_{F_T} \lesssim\|u_0\|_{\dot H^{\frac 12}} \,.
$$
Let us admit for a while the following inequality:
\beq
\label {kochtatarusmoothdemoeq2}
\|PB(u,v)\|_{F_T}  \lesssim  \|u\|_{\rm K} \|v\|_{F_T}\,.
\eeq
Then it is obvious that if~$ \|u\|_{\rm K}$ is small enough for some time~$[0,  T[$, the linear equation~$(PGNS)$ has  a unique solution in~$F_{  T}$ (in the distribution sense) which satisfies in particular,  if~$c_0$ is small enough,
\beno
\|v\|_{F_{  T}} \leq C\|u_0\|_{\dot H^{\frac 12}} + \frac 12 \|v\|_{F_{  T}} \,.
\eeno
 As~$u$ is a regular solution of~$ (PGNS)$, it therefore satisfies  
  $$
\forall t<  T\,,\  \|u\|_{L^4([0,t];\dot H^1)} \leq 2 C\|u_0\|_{\dot H^{\frac 12}}
 $$
 which implies that~$T^\star (u_0)>  T$ ,  so the lemma is proved  provided we prove Inequality\refeq {kochtatarusmoothdemoeq2}. 
 
 Let us observe that for any~$j$ in~$\ZZ$, 
\beq
\label {kochtatarusmoothdemoeq3}
\partial_t \D_j PB(u,v) -\D \D_j PB(u,v) =-\D_j\cQ(u,v)\,.
 \eeq
 By definition of~$\cQ$, we have
 $$
\bigl\| \D_j\cQ(u,v) (t) \|_{L^2}  \leq \sum_{j'\in \ZZ} \sum_{1\leq i, k,\ell\leq 3} \bigl (\bigl \| \D_jA^i_{k,\ell} (D) \bigl(S_{j'+1} u\D_{j'} v\bigr)\bigr\|_{L^2} + \bigl \| \D_jA^i_{k,\ell} (D) \bigl(\D_{j'} vS_{j'} u\bigr)\bigr\|_{L^2}\bigr)\,.
 $$
 As~$A^i_{k,\ell} (D) $ are smooth homogeneous Fourier multipliers of order~$1$, we   infer that  for some fixed nonnegative integer~$N_0 $
\beno
\bigl\| \D_j\cQ(u,v) (t) \|_{L^2} &   \lesssim&  2^j \sum_{j'\geq j-N_0}  \bigl (\bigl \| S_{j'+1} u(t)\D_{j'} v(t)\bigr\|_{L^2} + \bigl \| \D_{j'} v(t)S_{j'} u(t)\bigr\|_{L^2}\bigr)\\
&   \lesssim &  2^j \sum_{j'\geq j-N_0}  \bigl ( \| S_{j'+1} u(t)\|_{L^\infty} \|\D_{j'} v(t)\|_{L^2} + \bigl \| \D_{j'} v(t)\|_{L^2} \|S_{j'} u(t)\bigr\|_{L^\infty}\bigr)\\
&   \lesssim &  2^j   \|u(t)\|_{L^\infty} \sum_{j'\geq j-N_0}   \|\D_{j'} v(t)\|_{L^2}\,.
\eeno
Using Relation\refeq {kochtatarusmoothdemoeq3} and the definition of the norm on~$F_T$,  we infer that 
\beno
\|\D_j PB(u,v)(t)\|_{L^2} & \leq  &  \int_0^t e^{-c2^{2j}(t-t')} \bigl\| \D_j\cQ(u,v) (t') \|_{L^2} dt'\\
& \lesssim & 2^j    \int_0^t e^{-c2^{2j}(t-t')}  \|u(t')\|_{L^\infty} \sum_{j'\geq j-N_0}   \|\D_{j'} v(t')\|_{L^2} dt'\\
 & \lesssim & 2^j \|u\|_{\rm K} \|v\|_{F_T}  \sum_{j'\geq j-N_0} c_{j'} 2^{-\frac {j'} 2} \int_0^t e^{-c2^{2j}(t-t')} \frac 1 {\sqrt {t'}} dt' \,,
 \eeno
 where~$(c_j)_{j\in \ZZ} $ denotes a generic element of the sphere of~$\ell^2(\ZZ)$. 
Thus we have, for all~$t$  less than~$T$, 
\beno
2^{\frac j 2} \|\D_j PB(u,v)(t)\|_{L^2} \lesssim \|u\|_{\rm K} \|v\|_{F_T}  \sum_{j'\geq j-N_0} c_{j'} 2^{-\frac {j'-j} 2} \int_0^t  2^j  e^{-c2^{2j}(t-t')} \frac 1 {\sqrt {t'}} dt' \,.
\eeno
Thanks to Young's inequality, we have~$\ds   \sum_{j'\geq j-N_0} c_{j'} 2^{-\frac {j'-j} 2}\lesssim c_j$ and we deduce that 
\beq
\label {kochtatarusmoothdemoeq4}
2^{\frac j 2} \|\D_j PB(u,v)(t)\|_{L^2} \lesssim  c_j \|u\|_{\rm K} \|v\|_{F_T}  \int_0^t  2^j  e^{-c2^{2j}(t-t')} \frac 1 {\sqrt {t'}} dt'. 
\eeq
As we have
\beno
\int_0^t  2^j  e^{-c2^{2j}(t-t')} \frac 1 {\sqrt {t'}} dt'   \lesssim  \int_0^t  \frac 1 {\sqrt {t-t'}}  \frac 1 {\sqrt {t'}} dt' \,,
\eeno we infer finally that 
\beq \label{kochtatarusmoothdemoeq5} 
\sum_{j\in \ZZ} 2^j \|\D_j PB(u,v)\|^2_{L^\infty([0,T]; L^2)} \lesssim   \|u\|^2_{\rm K} \|v\|_{F_T} ^2\,.
\eeq
Moreover returning to Inequality\refeq{kochtatarusmoothdemoeq4}, we have
$$
2^j \|\D_j PB(u,v)\|_{L^4([0,T]; L^2)} \lesssim  c_j \|u\|_{\rm K} \|v\|_{F_T} \Bigl\|  \int_0^t  2^{\frac {3j} 2}  e^{-c2^{2j}(t-t')} \frac 1 {\sqrt {t'}} dt'\Bigr\|_{L^4(\R+)}\, . 
$$
The Hardy-Littlewood-Sobolev inequality implies that 
$$
 \Bigl\|  \int_0^t  2^{\frac {3j} 2}  e^{-c2^{2j}(t-t')} \frac 1 {\sqrt {t'}} dt'\Bigr\|_{L^4(\R+)} \lesssim1\, .
$$
Since thanks to the Minkowski inequality there holds
$$
\| PB(u,v) \|_{L^4([0,T];\dot H^1)}^2 \leq \sum_{j\in \ZZ} 2^{2j}  \|\D_j PB(u,v)\|^2 _{L^4([0,T]; L^2)}\,,
$$
together with Inequality\refeq  {kochtatarusmoothdemoeq5} this concludes the proof of Inequality\refeq {kochtatarusmoothdemoeq2} and thus the proof of Lemma\refer {LemmaFK--KT}.
\end{proof}

\section{Proof of Theorem \ref{theolifespan}}\label{prooftheolifespan}
The plan of the proof of Theorem~\ref{theolifespan}
 is the following: 
 as previously we look for the solution of $(NS)$ under the form
$$
u = u_\L + w
$$
where we recall that~$ u_\L (t) = e^{t\Delta} u_0$. Moreover we recall that the solution~$u$ satisfies the energy inequality~(\ref{energyinequality}). By construction, the fluctuation~$w$ satisfies 
$$
{\rm(NSF)}\qquad\qquad \partial_t w-\Delta w +(u_{\L}+w)\cdot \nabla w +w\cdot \nabla u_{\L} = -u_{\L}\cdot \nabla u_{\L} -\nabla p\,, \quad \dive w = 0 \,.
$$
Let us prove that the life span of~$w$ satisfies the     lower bound~(\ref{lowerboundbetter}). 
The first step of the proof consists in proving Proposition~\ref{rescaledenergy}, stated in the introduction. This   is achieved in Section~\ref{proofrescaledenergy}.
The next step is the proof of a similar energy estimate on~$\partial_3 w$ --- note that contrary to
the scaled energy estimate of Proposition~\ref{rescaledenergy}, the next result
 is useful in general only locally in time. It is proved in Section~\ref{proofrenergypartial3}.
\begin{prop}\label{energypartial3}
{\sl With the notation of  Proposition~{\rm\ref{rescaledenergy}} and Theorem~{\rm\ref{theolifespan}}, the fluctuation~$w$ satisfies the following estimate:
$$
\cE \big(
\partial_3 w
\big)(t) \lesssim
\Big(Q_{\L}^0\big(  t^\frac12 \sup_{t'\in (0,t)}\|\partial_3 w(t)\|_{L^2}^4 + \|\partial_3u_0\|^2_{\dot B^{-\frac32}_{\infty,\infty}} \big)+ \sqrt{Q_{\L}^0 Q_{\L}^1 }
\Big)
\exp  \big( 2 \|u_0\|^ 2_{\dot B^{-1}_{\infty,2}}\big)
 \,.
$$
}\end{prop}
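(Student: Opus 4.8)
The plan is to prove Proposition~\ref{energypartial3} by an~$L^2(\R^3)$ energy estimate on~$\partial_3 w$, where~$w$ solves~${\rm (NSF)}$. Applying~$\partial_3$ to~${\rm (NSF)}$ and taking the~$L^2(\R^3)$ scalar product with~$\partial_3 w$, the pressure term drops out because~$\dive\partial_3 w=0$, and the contribution of~$(u_\L+w)\cdot\nabla\partial_3 w$ vanishes because~$\dive(u_\L+w)=0$, so that one is left with
$$
\frac12\frac{d}{dt}\|\partial_3 w\|_{L^2}^2+\|\nabla\partial_3 w\|_{L^2}^2
=-\bigl\langle(\partial_3 u_\L+\partial_3 w)\cdot\nabla w,\partial_3 w\bigr\rangle
-\bigl\langle\partial_3(w\cdot\nabla u_\L),\partial_3 w\bigr\rangle
-\bigl\langle\partial_3\PP(u_\L\cdot\nabla u_\L),\partial_3 w\bigr\rangle\,.
$$
It then remains to estimate the three terms on the right-hand side, to integrate in time, and to run a Gr\"onwall argument, keeping the dissipation term~$\|\nabla\partial_3 w\|_{L^2}^2$ at our disposal to absorb the roughest contributions.

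First I would treat the two terms carrying~$u_\L$, namely~$-\langle\partial_3 u_\L\cdot\nabla w,\partial_3 w\rangle$ (extracted from the first bracket) and~$-\langle\partial_3(w\cdot\nabla u_\L),\partial_3 w\rangle$. Integrating by parts and using~$\dive u_\L=\dive\partial_3 u_\L=0$, each of these reduces to an expression bounded by~$\|u_\L\|_{L^\infty}\|\partial_3 w\|_{L^2}\|\nabla\partial_3 w\|_{L^2}$ or by~$\|\partial_3 u_\L\|_{L^\infty}\|w\|_{L^2}\|\nabla\partial_3 w\|_{L^2}$. A Young inequality absorbing a fraction of~$\|\nabla\partial_3 w\|_{L^2}^2$ leaves, on the one hand,~$\|u_\L(t)\|_{L^\infty}^2\|\partial_3 w(t)\|_{L^2}^2$, which is handled by Gr\"onwall's lemma and the identity~$\int_0^\infty\|u_\L(t)\|_{L^\infty}^2\,dt=\|u_0\|_{\dot B^{-1}_{\infty,2}}^2$ — this, together with the exponential already present in the bounds of Proposition~\ref{rescaledenergy}, is the source of the factor~$\exp\bigl(2\|u_0\|_{\dot B^{-1}_{\infty,2}}^2\bigr)$ — and, on the other hand,~$\|\partial_3 u_\L(t)\|_{L^\infty}^2\|w(t)\|_{L^2}^2$, for which I would combine the heat-flow bound~$\|\partial_3 u_\L(t)\|_{L^\infty}=\|e^{t\Delta}\partial_3 u_0\|_{L^\infty}\lesssim t^{-\frac34}\|\partial_3 u_0\|_{\dot B^{-\frac32}_{\infty,\infty}}$ with the estimate~$\int_0^t {t'}^{-\frac32}\|w(t')\|_{L^2}^2\,dt'\lesssim Q_\L^0\exp\|u_0\|_{\dot B^{-1}_{\infty,2}}^2$ furnished by Proposition~\ref{rescaledenergy}; this produces exactly the contribution~$Q_\L^0\|\partial_3 u_0\|_{\dot B^{-\frac32}_{\infty,\infty}}^2$.

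For the source term~$-\langle\partial_3\PP(u_\L\cdot\nabla u_\L),\partial_3 w\rangle$, the key ingredient is the one-dimensional interpolation inequality~$\|\partial_3 f\|_{L^2}^2\le\|f\|_{L^2}\|\partial_3^2 f\|_{L^2}$, which — applied with~$f=\PP(u_\L\cdot\nabla u_\L)$ and combined with a Cauchy--Schwarz in time — gives~$\int_0^\infty t\,\|\partial_3\PP(u_\L\cdot\nabla u_\L)(t)\|_{L^2}^2\,dt\le\sqrt{Q_\L^0\,Q_\L^1}$. Pairing the source with~$\partial_3 w$, whose~$L^2$ norm (with the appropriate time weight) is controlled through the scaled energy and dissipation of Proposition~\ref{rescaledenergy}, then yields the contribution~$\sqrt{Q_\L^0 Q_\L^1}$, again up to the exponential factor.

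The genuinely delicate term is the purely nonlinear one,~$-\langle\partial_3 w\cdot\nabla w,\partial_3 w\rangle$, which is cubic in the unknown~$\partial_3 w$. I would estimate it by a Gagliardo--Nirenberg interpolation in~$\R^3$ — a variant making use of the distinguished direction~$x_3$, after possibly an integration by parts that exchanges a derivative between the factors — of the schematic form~$|\langle\partial_3 w\cdot\nabla w,\partial_3 w\rangle|\lesssim\|\nabla w\|_{L^2}\|\partial_3 w\|_{L^2}^{\frac12}\|\nabla\partial_3 w\|_{L^2}^{\frac32}$, and then apply a Young inequality to absorb another fraction of~$\|\nabla\partial_3 w\|_{L^2}^2$, leaving the remainder~$\|\nabla w\|_{L^2}^4\|\partial_3 w\|_{L^2}^2$. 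Bounding~$\|\partial_3 w(t')\|_{L^2}^2\le\sup_{t'\in(0,t)}\|\partial_3 w(t')\|_{L^2}^2$ and controlling~$\int_0^t\|\nabla w(t')\|_{L^2}^4\,dt'$ via the scaled energy estimates of Proposition~\ref{rescaledenergy} — which in particular give~$\int_0^t\|\nabla w(t')\|_{L^2}^2\,dt'\lesssim t^{\frac12}Q_\L^0\exp\|u_0\|_{\dot B^{-1}_{\infty,2}}^2$ and~$\|w(t)\|_{L^2}^2\lesssim t^{\frac12}Q_\L^0\exp\|u_0\|_{\dot B^{-1}_{\infty,2}}^2$ — produces exactly the term~$Q_\L^0\,t^{\frac12}\sup_{t'\in(0,t)}\|\partial_3 w(t')\|_{L^2}^4$. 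Gathering the four contributions and closing the Gr\"onwall argument then gives the stated inequality. The main obstacle is precisely this cubic term: it is the reason why the right-hand side must contain the a priori uncontrolled quantity~$\sup_{t'\in(0,t)}\|\partial_3 w(t')\|_{L^2}$, and why (unlike the scaled estimate of Proposition~\ref{rescaledenergy}) the resulting bound is useful only for~$t$ not too large, so that when it is fed into the proof of Theorem~\ref{theolifespan} it has to be coupled with a continuity/bootstrap argument on~$[0,T_\L(u_0)]$.
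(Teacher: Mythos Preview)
Your overall strategy matches the paper's, but two of your four estimates do not close as written, and the more serious one is precisely the cubic term you flag as delicate.

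For the nonlinear term~$-\langle\partial_3 w\cdot\nabla w,\partial_3 w\rangle$ you use the isotropic Gagliardo--Nirenberg bound~$\|\partial_3 w\|_{L^4}^2\lesssim\|\partial_3 w\|_{L^2}^{1/2}\|\nabla\partial_3 w\|_{L^2}^{3/2}$, which after Young leaves the remainder~$\|\nabla w\|_{L^2}^4\|\partial_3 w\|_{L^2}^2$. But Proposition~\ref{rescaledenergy} gives no control on~$\int_0^t\|\nabla w\|_{L^2}^4\,dt'$: it bounds~$\int_0^t t'^{-1/2}\|\nabla w\|_{L^2}^2\,dt'$ and~$\sup t'^{-1/2}\|w\|_{L^2}^2$, neither of which dominates an~$L^4$-in-time norm of~$\|\nabla w\|_{L^2}$. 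Your claimed output~$Q_\L^0\,t^{1/2}\sup\|\partial_3 w\|^4$ therefore does not follow. The paper instead uses the \emph{anisotropic} H\"older/interpolation inequalities
\[
\bigl|\langle\partial_3 w\cdot\nabla w,\partial_3 w\rangle\bigr|
\le \|\partial_3 w\|_{L^2_\v L^4_\h}^2\,\|\nabla w\|_{L^\infty_\v L^2_\h}
\lesssim \|\partial_3 w\|_{L^2}\,\|\nabla_\h\partial_3 w\|_{L^2}\,\|\nabla w\|_{L^2}^{1/2}\,\|\nabla\partial_3 w\|_{L^2}^{1/2},
\]
which after Young leaves~$\|\nabla w\|_{L^2}^2\,\|\partial_3 w\|_{L^2}^4$. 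This remainder integrates cleanly: writing~$\|\nabla w\|^2=t'^{1/2}\cdot t'^{-1/2}\|\nabla w\|^2$ and pulling out~$t^{1/2}\sup\|\partial_3 w\|^4$ gives exactly~$t^{1/2}Q_\L^0\sup\|\partial_3 w\|^4\exp(\cdots)$. The swap of exponents (the paper lands two powers on~$\|\nabla w\|$ and four on~$\|\partial_3 w\|$, not the reverse) is what makes the estimate close, and it requires the anisotropic inequalities~$\|a\|_{L^\infty_\v L^2_\h}\lesssim\|a\|_{L^2}^{1/2}\|\partial_3 a\|_{L^2}^{1/2}$ and~$\|a\|_{L^2_\v L^4_\h}\lesssim\|a\|_{L^2}^{1/2}\|\nabla_\h a\|_{L^2}^{1/2}$.

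There is a smaller issue with your treatment of the source term. Your interpolation~$\int_0^\infty t\,\|\partial_3 F(t)\|_{L^2}^2\,dt\le\sqrt{Q_\L^0 Q_\L^1}$ (with~$F=\PP(u_\L\cdot\nabla u_\L)$) is correct, but pairing this with~$\partial_3 w$ forces a time weight~$t'^{-1}$ on~$\|\partial_3 w\|_{L^2}^2$, and neither Proposition~\ref{rescaledenergy} nor a Gr\"onwall absorption handles~$\int_0^t t'^{-1}\|\partial_3 w\|^2\,dt'$. The paper avoids this by one more integration by parts, writing~$(c)=\langle\partial_3^2 F,w\rangle$ and then using directly~$\bigl(\int t'^{3/2}\|\partial_3^2 F\|^2\bigr)^{1/2}\bigl(\int t'^{-3/2}\|w\|^2\bigr)^{1/2}\le\sqrt{Q_\L^1}\cdot\sqrt{Q_\L^0}\,e^{\cdots}$, with the second factor supplied by Proposition~\ref{rescaledenergy}.
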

Combining both propositions, one can conclude the proof of Theorem~\ref{theolifespan}.  This is performed in Section~\ref{conclusionprooftheorem}. 

\subsection{The rescaled energy estimate on the fluctuation: proof of Proposition~\ref{rescaledenergy}}\label{proofrescaledenergy}
An~$L^2$ energy estimate on (NSF) gives
$$
\frac 12\frac d {dt} \|w(t)\|_{L^2}^2 + \|\nabla w(t)\|_{L^2}^2  = -\sum_{1\leq j,k\leq 3}\int_{\R^3} w^j\partial_j u_{\L}^k w^k(t,x) dx
-\bigl( \PP(u_{\L}\cdot \nabla u_{\L})\big | w\bigr)(t)\,.
$$
>From this, after an integration by parts and using the fact that the divergence of~$w$ is zero, we infer that 
$$
\longformule{\frac 12\frac d {dt}\Bigl( \frac { \|w(t)\|_{L^2}^2} { t^{\frac 12}} \Bigr)  + \frac { \|w(t)\|_{L^2}^2} { 2t^{\frac 32}} 
+  
\frac { \|\nabla w(t)\|_{L^2}^2} { t^{\frac 12}}  
}
{ {}\leq
\frac {\|w(t)\|_{L^2} \|u_{\L}(t)\|_{L^\infty} \|\nabla w(t)\|_{L^2} }  { t^{\frac 12}}
+\frac { \|\PP(u_{\L}\cdot \nabla u_{\L})(t)\|_{L^2}\|w(t)\|_{L^2}  } { t^{\frac 12}}\,\cdotp
}
$$
Let us observe that 
$$
\frac { \|\PP(u_{\L}\cdot \nabla u_{\L})(t)\|_{L^2}\|w(t)\|_{L^2}  } { t^{\frac 12}}
=   t^{\frac 14}  \|\PP(u_{\L}\cdot \nabla u_{\L})(t)\|_{L^2} 
\frac { \|w(t)\|_{L^2}  } { t^{\frac 34}}\,\cdotp
$$
Using a convexity inequality, we infer that 
$$
\frac d {dt}\Bigl( \frac { \|w(t)\|_{L^2}^2} { t^{\frac 12}} \Bigr)  + \frac { \|w(t)\|_{L^2}^2} { 2t^{\frac 32}}+  
\frac { \|\nabla w(t)\|_{L^2}^2} { t^{\frac 12}}  
\leq
\frac {\|w(t)\|^2_{L^2} \|u_{\L}(t)\|^2_{L^\infty}  }  { t^{\frac 12}}
+ { t^{\frac 12}} \|u_{\L}(t)\cdot\nabla u_{\L}(t)\|^2_{L^2}\,.
$$
Thus we deduce that
$$
\begin{aligned} \frac d {dt}\biggl( \frac { \|w(t)\|_{L^2}^2} { t^{\frac 12}}\exp \Bigl(- \!\!\int_0^t\!\!\!  \|u_{\L}(t')\|^2_{L^\infty} dt'   \Bigr)\biggr) 
 + \exp \Bigl(-  \!\!\int_0^t \!\!\! \|u_{\L}(t')\|^2_{L^\infty} dt'   \Bigr)\Bigl(\frac { \|w(t)\|_{L^2}^2} { 2t^{\frac 32}}+  
\frac { \|\nabla w(t)\|_{L^2}^2} { t^{\frac 12}}  \Bigr)
\\ \leq  \exp \Bigl(- \!\!\int_0^t \!\!\!\|u_{\L}(t')\|^2_{L^\infty} dt'   \Bigr)
 { t^{\frac 12}} \|\PP(u_{\L}\cdot \nabla u_{\L})(t)\|^2_{L^2}\,,
\end{aligned}
$$
from which we infer by the definition of the~$\dot B^{-1}_{\infty,2}$ norm and of~$Q_{\L}^0$ that
\beq
\label {estimfluctNL}
\forall t \geq 0\, , \quad   \frac {\|w(t)\|^2_{L^2}  }  { t^{\frac 12}}
+\int_0^t  \Bigl(\frac { \|w(t')\|_{L^2}^2} { 2{t'}^{\frac 32}}
+  
\frac { \|\nabla w(t)\|_{L^2}^2} {{t'}^{\frac 12}}  \Bigr) dt' 
\leq Q_{\L}^0 \, \exp {\|u_0\|^2_{\dot B^{-1}_{\infty,2}} } \, .
\eeq
Proposition~\ref{rescaledenergy} follows. \qed

\subsection{Proof of Proposition~\ref{energypartial3}}\label{proofrenergypartial3}

Now let us  investigate the evolution of~$\partial_3w$ in~$L^2$. Applying the partial differentiation~$\partial_3$ to~(NSF), we get
\begin{equation}\label{eqpartial3w}
\begin{aligned}
\partial_t \partial_3w&-\Delta \partial_3 w +(u_{\L}+w)\cdot \nabla  \partial_3 w +\partial_3w\cdot \nabla u_{\L} 
\\
&\quad = -\partial_3 u_{\L}\cdot \nabla  w - \partial_3 w\cdot \nabla w 
- w\cdot \nabla \partial_3 u_{\L} 
-\partial_3 ( u_{\L}\cdot \nabla u_{\L}) -\nabla \partial_3 p\,.
\end{aligned}
\end{equation}
The difficult terms to estimate are those which do not contain explicitly $\partial_3 w$. So let us define
\beno
(a) & \eqdefa & - \bigl (\partial_3 u_{\L}\cdot \nabla  w\big| \partial_3w\bigr)_{L^2}\,,\\
(b) & \eqdefa &- \bigl(  w\cdot \nabla \partial_3 u_{\L}\big | \partial_3w)_{L^2}\andf \\
(c) & \eqdefa &-\big( \partial_3 ( u_{\L}\cdot \nabla u_{\L}) \big | \partial_3 w\bigr)_{L^2}\,.
\eeno
The third term is the easiest.  By integration by parts and using the Cauchy-Schwarz inequality along with\refeq{estimfluctNL} we have
\beno
\Big | \int_0^\infty (c)(t) dt  \Big |& =&\Big | \int_0^\infty \int_{\R^3}  \partial_3^2\big (  \PP(u_{\L}\cdot \nabla u_{\L})(t,x) \big) \cdot   w(t,x) dxdt\Big | \\
 & \leq &\Bigl( \int_0^\infty   t^{\frac 32 } \bigl\|\partial_3^2  \PP(u_{\L}\cdot \nabla u_{\L}) (t)\bigr\|^2_{L^2} dt \Bigr)^{\frac 12}\Bigl( \int_0^\infty  \frac {\|w(t)\|^2_{L^2}} {t^{\frac 32}}  dt\Bigr)^{\frac 12}\\
& \leq &\sqrt{Q_{\L}^0  Q_{\L}^1 }\, \exp\Big(\frac12{\|u_0\|^2_{\dot B^{-1}_{\infty,2}} } \Big)  \,.
\eeno
Now let us estimate the contribution of~$(a)$ and~$(b)$. By integration by parts, we get, thanks to the divergence free condition on~$u_\L$,
$$
(a) = \bigl (\partial_3 u_{\L}\otimes w\big| \nabla  \partial_3w\bigr)_{L^2} \andf 
(b) = \bigl(  w\otimes \partial_3 u_{\L}\big |\nabla  \partial_3w)_{L^2}\,.
$$
The two terms can be estimated exactly in the same way since they are both of the form
 $$
 \int_{\R^3} w(t,x) \partial_3 u_{\L} (t,x) \nabla \partial_3 w(t,x) dx\, .
 $$
 We have
\beno
 \Big |\int_{\R^3} w(t,x) \partial_3 u_{\L} (t,x) \nabla \partial_3 w(t,x) dx \Big |&
\leq & \|w(t)\|_{L^2} \|\partial_3u_{\L}(t)\|_{L^\infty} \|\nabla
\partial_3 w\|_{L^2}\\
 & \leq &\frac 1 {100}  \|\nabla \partial_3 w\|_{L^2}^2 +100
\|w(t)\|^2_{L^2} \|\partial_3u_{\L}(t)\|_{L^\infty}^2\,.
 \eeno
The first term will be absorbed by the Laplacian. The second term can be
understood as a source term. By time integration, we get indeed
\beno
\int_0^T  \|w(t)\|^2_{L^2} \|\partial_3u_{\L}(t)\|_{L^\infty}^2 dt & \leq &
\int_0^T   \frac {\|w(t)\|^2_{L^2} } {t^{\frac 32} } \bigl(t^{\frac 34}
\|\partial_3u_{\L}(t)\|_{L^\infty}\bigr)^2 dt\\
& \leq &  \|\partial_3u_0\|^2_{\dot B^{-\frac 32}_{\infty,\infty} }
\int_0^\infty   \frac {\|w(t)\|^2_{L^2} } {t^{\frac 32} }dt\,,
\eeno
so it follows, thanks to Proposition~\ref{rescaledenergy}, that
$$
\begin{aligned}
 \int_0^T\int_{\R^3} w(t,x) \partial_3 u_{\L} (t,x) \nabla \partial_3 w(t,x) dxdt & \leq\frac 1 {100} \int_0^T  \|\nabla \partial_3 w(t)\|_{L^2}^2\, dt \\
 &\quad + C \|\partial_3u_0\|^2_{\dot B^{-\frac 32}_{\infty,\infty} }Q_{\L}^0 \, \exp  {\|u_0\|^2_{\dot B^{-1}_{\infty,2}} }   \,.
 \end{aligned}
 $$
The contribution of the quadratic term in~(\ref{eqpartial3w}) is estimated as follows: writing, for any function~$a$,
$$
\|a\|_{L^p_\h L^q_\v}\eqdefa \Big(\int \|a(x_1,x_2,\cdot)\|_{L^q(\R)}^p\, dx_1dx_2\Big)^\frac1p\,,
$$
we have by H\"older's inequality
\beno
\Big | \int_{\R^3}  \partial_3 w(t,x) \cdot \nabla w (t,x)  \partial_3 w(t,x) dx \Big |&
\leq & \|\partial_3 w(t)\|_{ L^2_\v L^4_\h}^2  \|\nabla
  w\|_{ L^\infty_\v L^2_\h} \\
& \leq &\|\partial_3 w(t)\| _{L^2} \|\nabla_\h \partial_3 w(t)\| _{L^2} \|\nabla  w(t)\| _{L^2} ^\frac12
 \|\nabla \partial_3 w(t)\| _{L^2}^\frac12 \,,
 \eeno
  where we have used the inequalities
\begin{equation}\label{anisotropy}
 \|a\|_{ L^\infty_\v L^2_\h} \lesssim \| \partial_3
a\|_{L^2}^{\frac12}\|a\|_{L^2}^{\frac12}\andf
  \|a \|_{L^2_\v L^4_\h}  \lesssim  \|a\|_{L^2}^{\frac12}
\|\nabla_\h a\|_{L^2}^{\frac12}
\end{equation}
with~$\nabla_\h\eqdefa(\partial_1,\partial_2)$.
The first inequality comes from
 \beno
 \|a(\cdot,x_3)\|_{L^2_\h}^2 & = & \frac 12 \int_{-\infty}^{x_3} \bigl(\partial_3 a(\cdot,z)\big | a(\cdot,z)\bigr)_{L^2_\h} dz\\
 & \leq & \frac12 \int_\R \|\partial_3 a(\cdot, z) \|_{L^2_\h} \|  a(\cdot, z) \|_{L^2_\h} dz\\
 & \leq & \|\partial_3 a\|_{L^2} \|  a\|_{L^2}
 \eeno
while the second simply comes from the embedding~$\dot H^\frac12_\h \subset L^4_\h$ and an interpolation.
By Young's inequality it follows that
$$
\begin{aligned}
\Big | \int_{\R^3}  \partial_3 w(t,x) \cdot \nabla w (t,x)  \partial_3 w(t,x) dx\Big |
&  \leq  \frac 1 {100}  \|\nabla \partial_3 w\|_{L^2}^2 + C\|\nabla  w(t)\| _{L^2} ^2\|\partial_3 w(t)\| _{L^2}^4\\
&  \leq  \frac 1 {100}  \|\nabla \partial_3 w(t)\|_{L^2}^2 \\
&\qquad\qquad
{}+ \displaystyle \Big( \sup_{t'\in[0,t]} \|\partial_3 w(t')\| _{L^2}^4\Big) t^\frac12   \frac{\|\nabla  w(t)\| _{L^2} ^2}{ t^\frac12 }\,\virgp
\end{aligned}
 $$
from which we infer by Proposition~\ref{rescaledenergy} that
$$
\begin{aligned}
\Big |  \int_{\R^3}  \partial_3 w(t,x) \cdot \nabla w (t,x)  \partial_3 w(t,x) dx\Big |& \leq\frac 1 {100}  \|\nabla \partial_3 w(t)\|_{L^2}^2 \\
&\quad +  \displaystyle\Big(\sup_{t'\in[0,t]} \|\partial_3 w(t')\| _{L^2}^4 \Big) t^\frac12 Q_{\L}^0\,\exp {\|u_0\|^2_{\dot B^{-1}_{\infty,2}} }  
\,.
 \end{aligned}
$$
 Finally   there holds after an integration by parts
\beno
 \int_{\R^3}  \partial_3 w(t,x) \cdot \nabla u_\L (t,x)  \partial_3 w(t,x) dx &
\leq & \|\partial_3 w(t)\|_{L^2} \|  u_\L (t)\|_{L^\infty}  \|\nabla
\partial_3 w(t)\|_{L^2 }  \\
&  \leq& \frac 1 {100}  \|\nabla \partial_3 w (t)\|_{L^2}^2 + C  \|\partial_3 w(t)\|_{L^2}^2\|  u_\L (t)\|_{L^\infty}^2 \,,
 \eeno
so plugging all these estimates together we infer  thanks to Gronwall's  inequality  that
$$
\begin{aligned}
\sup_{t\in [0,T] }  &\|\partial_3 w(t)\|_{L^2 }^2 + \int_0^T  \|\nabla \partial_3 w(t)\|_{L^2}^2\, dt\\  &\quad  \lesssim 
\Big(  T^\frac12 Q_{\L}^0 \displaystyle\sup_{t'\in[0,t]} \|\partial_3 w(t')\| _{L^2}^4 + \|\partial_3u_0\|^2_{\dot B^{-\frac 32}_{\infty,\infty} }Q_{\L}^0 + \sqrt{Q_{\L}^0  Q_{\L}^1 }\Big)  \exp\Big( 2{\|u_0\|^2_{\dot B^{-1}_{\infty,2}} } \Big)\,.
\end{aligned}
$$
Proposition~\ref{energypartial3} is proved.\qed

\subsection{End of the proof of Theorem~\ref{theolifespan}}\label{conclusionprooftheorem}
\subsubsection{Control of the fluctuation}
To make notation lighter let us set
$$
M_\L\eqdefa  \Big(\|\partial_3u_0\|^2_{\dot B^{-\frac 32}_{\infty,\infty} }Q_{\L}^0 \\   + \sqrt{Q_{\L}^0  Q_{\L}^1 }\Big)  \exp\big( 2{\|u_0\|^2_{\dot B^{-1}_{\infty,2}} } \big)\,.
$$
Proposition~\ref{energypartial3} provides the existence of a constant~$K$ such that the following a priori estimate holds
$$
\longformule{
\sup_{t\in [0,T] }  \|\partial_3 w(t)\|_{L^2 }^2 + \int_0^T  \|\nabla \partial_3 w(t)\|_{L^2}^2\, dt  
}
{ {}
 \leq K 
  T^\frac12 Q_{\L}^0\displaystyle\sup_{t\in[0,T]} \|\partial_3 w(t)\| _{L^2}^4   \exp\big( 2{\|u_0\|^2_{\dot B^{-1}_{\infty,2}} } \big)+ K M_\L \,.
}
$$
Let~$T^*$ be the maximal time of existence of~$u$, hence of~$w$, and recalling that~$w(t=0) = 0$, set~$T_1$ to be the maximal time~$T$ for   which
$$
 \sup_{t\in[0,T]} \|\partial_3 w(t)\| _{L^2}^2 \leq 2 K  M_\L\,.
 $$
 Then on~$[0,T_1]$ there holds
\beno
\sup_{t \in [0,T]}  \|\partial_3 w(t)\|_{L^2 }^2  + \int_0^T  \|\nabla \partial_3 w(t)\|_{L^2}^2\, dt   & \leq & 4K^3
  T_1^\frac12 Q_{\L}^0   M_\L^2 + K M_\L\\
 & \leq & KM_L\bigl(1+ 4K^2 T_1^\frac12 Q_{\L}^0   M_\L\bigr) \,.
\eeno
 This implies that 
 $$
 T_1 \geq T_* \quad \mbox{with} \quad 
  T_* \eqdefa  \left(\frac {1}{8K^2  Q_{\L}^0    M_\L} \right)^2\,\virgp
 $$
 and on~$[0,T_*]$ there holds
\begin{equation}\label{estimate0T*}
\sup_{t \in [0,T]}  \|\partial_3 w(t)\|_{L^2 }^2  + \int_0^T  \|\nabla \partial_3 w(t)\|_{L^2}^2\, dt  \leq \frac 32 K M_\L\,.
 \end{equation}

\subsubsection{End of the proof of the theorem}
Under the assumptions of Theorem~\ref{theolifespan} we know that there exists a unique solution~$u$ to (NS) on some time interval~$[0,T^*)$, which satisfies the energy estimate.    Let us prove that this time interval contains~$[0,T_*]$. Since the initial data~$u_0$ belongs to~$L^2$, we may assume  that~$u$ is a global Leray solution, meaning that
\begin{equation}\label{energyu}
\forall t \geq 0 \, , \quad \cE\big(u(t)\big) \leq \frac12\|u_0\|_{L^2}^2\,.
\end{equation}
Moreover one clearly has
$$
\sup_{t\geq 0}  \|\partial_3 u_\L(t)\|_{L^2 }^2+  \int_0^\infty \|\nabla\partial_3  u_\L (t)\|_{ L^2 }^2 \, dt\leq \|\partial_3 u_0\|_{ L^2 }^2
$$
so together with~(\ref{estimate0T*}) this implies that on~$[0,T_*]$,
\begin{equation}\label{controlgradd3u}
\sup_{t \in [0,T]}  \|\partial_3 w(t)\|_{L^2 }^2+\int_0^T \|\nabla\partial_3  u(t)\|_{ L^2 }^2 dt   \lesssim \|\partial_3 u_0\|_{ L^2 }^2 +    M_\L\,. 
\end{equation}
Let us prove that these estimates provide a control on~$u$  in~$\dot H^1$ on~$[0,T_*]$.
After differentiation of~(NS) with respect to the horizontal variables and
an energy estimate, we get for any~$\ell $ in~$ \{1,2\}$ and after an integration by parts
\beno
\frac12  \frac d {dt} \|\partial_\ell u(t)\|_{L^2}^2 +\|\nabla \partial_\ell
u(t)\|_{L^2}^2 & = & - \int_{\R^3} \partial_\ell (u\cdot \nabla u )\cdot
\partial_\ell u  \,(t,x)\, dx\\
& \leq &\|u\|_{L^\infty_\v L^4_\h} \|\nabla u(t)\|_{L^2_\v L^4_\h} \|\partial_\ell^2
u(t)\|_{L^2} \, .
\eeno
Similarly to~(\ref{anisotropy})  
we have
 \beno
 \|u\|_{L^\infty_\v L^4_\h}^2 & \lesssim &
  \|u\|_{L^\infty_\v \dot H^\frac12_\h}^2 \\
  & \lesssim & \int_{-\infty}^{x_3} \bigl(\partial_3 u(\cdot,z)\big | u(\cdot,z)\bigr)_{\dot H^\frac12_\h} dz\\
 & \lesssim & \|\partial_3 u\|_{L^2} \| \nabla_\h u\|_{L^2}
 \eeno
so using~(\ref{anisotropy})  
we infer that
\beno
 \Big|   \int_{\R^3} \partial_\ell (u\cdot \nabla u )\cdot
\partial_\ell u  \,(t,x)\, dx   \Big|& \leq
 & C
 \|\partial_3 u(t)\|_{L^2} ^\frac12 \| \nabla_\h u(t)\|_{L^2} ^\frac12
  \|\nabla u(t)\|_{L^2}  ^\frac12 
   \|\nabla \nabla_\h u(t)\|_{L^2}  ^\frac12 
 \|\partial_\ell^2 u(t)\|_{L^2}\\
& \leq & \frac 1 {100}  \|\nabla \nabla_\h  u(t)\|_{L^2}^2 +C
 \|\partial_3 u\|_{L^2} ^2 \| \nabla_\h u\|_{L^2} ^2
  \|\nabla u(t)\|_{L^2}^2 \, .
\eeno
We obtain
$$
 \frac d {dt} \|\nabla_\h u(t)\|_{L^2}^2 +\|\nabla \nabla_\h 
u(t)\|_{L^2}^2 \lesssim  \|\partial_3 u\|_{L^2} ^2 \| \nabla_\h u\|_{L^2} ^2
  \|\nabla u(t)\|_{L^2}^2 \, ,
$$
and Gronwall's inequality implies that
$$
\|\nabla_\h  u(t)\|_{L^2}^2 +\int_0^t\|\nabla \nabla_\h 
u(t')\|_{L^2}^2 dt'
\leq \|\nabla_\h  u_0\|_{L^2}^2 \exp \Bigl( \int_0^t \|\partial_3 u(t')\|_{L^2} ^2
  \|\nabla u(t')\|_{L^2}^2 dt'\Bigr) \, .
$$
The fact that we control~$\|\nabla u\|_{L^2_t(L^2_x)}$  and~$\| \partial_3  u\|_{L^\infty_t(L^2_x)}$ thanks to~(\ref{energyu}) and~(\ref{controlgradd3u}) implies that on~$[0,T_*]$ there holds
$$
\sup_{t \in [0,T]}    \|\nabla u(t)\|_{ L^2  }^2 + \int_0^T \|\nabla^2 u(t)\|_{L^2 } ^2 \, dt \leq  \|\nabla u_0\|_{ L^2  }^2\exp \Bigl( \|  u_0\|_{ L^2  } (KM_\L)^\frac12  \Bigr) \, .$$
This means that there is a unique, smooth solution at least on~$[0,T_*]$, and   Theorem~\ref{theolifespan} is proved. \qed

\section{Comparison of both life spans: proof of Theorem \ref{theoexample}}
\label{example}
Let us introduce the notation
$$
 f_\e(x_1,x_2,x_3)\eqdefa  \cos \Bigl( \frac {x_1} \e\Bigr) f\Bigl( x_1,\frac{x_2}{\e^\al}, x_3\Bigr)\,,
$$
  where~$\e$ is a given number, assumed to be small, and~$\alpha  $ is a fixed parameter in the open interval~$]0,1[$. We assume the initial data is given by the following expression
\begin{equation}
\label{definitialdata}
u_{0,\e} (x) = \frac{A_ \e}\e  \bigl( 0,\e^\al(-\partial_3\f)_\e, (\partial_2\f)_\e\bigr)
\end{equation}
where~$\f$ is a smooth compactly supported function and the parameter~$A_\e \gg 1$ will be tuned later.
  
\medbreak
Let us recall that Lemma~3.1 of~\cite{cg2} claims in particular that 
\beq
\label {recallLemma31cg2}
\forall \s>0\,,\ \|f_{\varepsilon}\|_{\dot B^{-\sigma}_{p,1}} \leq C_\s \e^{\sigma + \frac\alpha p}
\andf \|f_{\e}\|_{\dot B^{-\s}_{\infty,\infty}}\geq c_\s \e^{\s}.
\eeq
This implies   that 
\beq
\label {recallLemma31cg2bis}
\|u_{0,\e}\|_{\dot B^{-1+2\g}_{\infty,\infty}} \lesssim A_\e \e^{-2\g}\,,\  \|u_{0,\e} \|_{\dot B^{-1}_{\infty,\infty}}\sim\|u_{0,\e} \|_{\dot B^{-1}_{\infty,2}}\sim  A_\e \quad \mbox{and }\quad \|\partial_3 u_{0,\e} \|_{\dot B^{-\frac32}_{\infty, \infty}}\lesssim A_\e \e^\frac12\,.
\eeq
With the notation  of Theorem \refer  {wpgnsBesov} there holds therefore.
$$
T_{\rm FP} (u_{0,\e})\geq C\e^{2} A_\e^{-\frac1\gamma}\,.
$$
Let us now compute~$T_\L(u_{0,\e})$. Recalling that~$u_{\L}(t) = e^{t\Delta}u_{0,\e}$, we can write 
\beno
u_L^{1} \partial_{1} u_L^{1} + u_L^{2} \partial_{2} u_L^{1}  
& = &   \Bigl(\frac {A_ \e}\e\Bigr)^2  e^{t\D}f_{\e} e^{t\D}g_{\e} \andf\\
u_L^{1} \partial_{1} u_L^{2} + u_L^{2} \partial_{2} u_L^{2}
& = &   \Bigl(\frac {A_ \e}\e\Bigr)^2 e^{t\D}\wt f_{\e}
e^{t\D}\wt g_{\e}.
\eeno
where~$f$, $g$, $\wt f$, $\wt g$  are  smooth compactly supported functions.
Now let us estimate
$$
\int_0^\infty t^{\frac 12} \bigl \| e^{t\D} f_\e \,  e^{t\D} g_\e\bigr\|^2_{L^2}  dt\,.
$$
for~$f$ and~$g$ given smooth compactly supported functions.
We write
\beno
\int_0^\infty t^{\frac 12} \bigl \| e^{t\D} f_\e \,  e^{t\D} g_\e\bigr\|^2_{L^2}  dt & = & 
\int_0^\infty t^{\frac 32} \bigl \| e^{t\D} f_\e \,  e^{t\D} g_\e\bigr\|^2_{L^2}  \frac {dt } t\\
&\leq & \int_0^\infty \bigl(t^{\frac 38} \| e^{t\D} f_\e\|_{L^4}\bigr)^2\bigl( t^{\frac 38} \|e^{t\D} g_\e\|\bigr)^2_{L^4}  \frac {dt } t
\eeno
thanks to the H\"older inequality.
The Cauchy-Schwarz inequality and the definition of Besov norms imply that 
\beno
\int_0^\infty t^{\frac 12} \bigl \| e^{t\D} f_\e e^{t\D} g_\e\bigr\|^2_{L^2}  dt 
&\leq & \Bigl(\int_0^\infty \bigl(t^{\frac 38} \| e^{t\D} f_\e\|_{L^4}\bigr)^4 \frac {dt } t\Bigr)^{\frac 12}
\Bigl(\int_0^\infty \bigl(t^{\frac 38} \| e^{t\D} g_\e\|_{L^4}\bigr)^4 \frac {dt } t\Bigr)^{\frac 12}\\
& \leq & \|f_\e\|_{\dot B^{-\frac 34}_{4,4}}^2 \|g_\e\|_{\dot B^{-\frac 34}_{4,4}}^2\, .
\eeno
It is easy to check that
$$
 \|f_\e\|_{\dot B^{-\frac 34}_{4,4}} \lesssim \e^{\frac {3+\al} 4}\,,
$$
so it follows (since~$\PP$ is a homogeneous Fourier multiplier of order~0) that
\beq
\label  {eqtimeQ0L}
Q_{\L}^0 \lesssim A_\e^4 \e^{\alpha-1}\,.
\eeq
For the initial data~(\ref{definitialdata}), differentations with respect to the vertical variable~$\partial_3$ have no real influence  on the term~Ê$u_{\L}(t)\cdot\nabla u_{\L}(t)$. Indeed, we have
$$
\partial_3^2 \bigl( u_{\L}(t)\cdot\nabla u_{\L}(t) \bigr) = \partial_3^2  u_{\L}(t)\cdot\nabla u_{\L}(t) +
2\partial_3  u_{\L}(t)\cdot  \partial_3\nabla u_{\L}(t)+  u_{\L}(t)\cdot\partial_3^2\nabla u_{\L}(t)
$$
and  it is then obvious that~$\partial_3^2 \bigl( u_{\L}(t)\cdot\nabla u_{\L}(t) \bigr)$ is a sum of term of the type
$$
 \Bigl(\frac {A_ \e}\e\Bigr)^2  \,e^{t\D} f_\e  \, e^{t\D} g_\e\,.
$$
Then following the lines used to   estimate the term~$Q_L^0$, we write
\beno
\int_0^\infty t^{\frac 3 2} \bigl \| e^{t\D} f_\e e^{t\D} g_\e\bigr\|^2_{L^2}  dt 
&\leq & \Bigl(\int_0^\infty \bigl(t^{\frac 58} \| e^{t\D} f_\e\|_{L^4}\bigr)^4 \frac {dt } t\Bigr)^{\frac 12}
\Bigl(\int_0^\infty \bigl(t^{\frac 58} \| e^{t\D} g_\e\|_{L^4}\bigr)^4 \frac {dt } t\Bigr)^{\frac 12}\\
& \leq & \|f_\e\|_{\dot B^{-\frac 54}_{4,4}}^2 \|g_\e\|_{\dot B^{-\frac 54}_{4,4}}^2\, .
\eeno
 It is easy to check that
$$
 \|f_\e\|_{\dot B^{-\frac 54}_{4,4}} \lesssim \e^{\frac {5+\al} 4}\,,
$$
so it follows that
$$
Q_{\L}^1 \lesssim A_\e^4\e^{\alpha+1}\,.
$$
Together with\refeq {recallLemma31cg2bis} and\refeq {eqtimeQ0L}, we infer   that 
\beno
Q_{\L}^0 \Big(
\|\partial_3u_0\|_{\dot B^{-\frac32}_{\infty,\infty}}^2 Q_{\L}^0+ \sqrt{Q_{\L}^0Q_{\L}^1}
\Big) \exp \big(4
\|u_0\|^ 2_{\dot B^{-1}_{\infty,2}} \bigr) 
& \lesssim & 
A_\e^4 \e^{\alpha-1}  \bigl( A_\e^6\e^{\al}+A_\e^4\e^\al\bigr)\exp (C_0 A_\e^2) \\
& \lesssim & A_\e^{10} \e^{2\al-1} \exp (C_0 A_\e^2)
\eeno
because~$A_\e$ is larger than~$1$.
Let us choose some~Ê$\kappa$ in~$]0,\eta[$ and then
$$
A_\e  \eqdefa \Bigl( \frac  {C_0} {-\kappa \log  \e}\Bigr)^{\frac 12}\cdotp
$$Then with the notation of Theorem~\ref{theolifespan} we have 
$$
T_\L = CA_\e^{-20} \e^{2(1-2\alpha+\kappa)}  \,.
$$
Let us choose~$\kappa'$ in~$]\kappa,\eta[$. By definition of~$A_\e$ we get  that
$$
T_L\geq C \e^{2(1-2\alpha+\kappa')}
$$
Choosing $\ds
\al = 1-\frac {\eta-\kappa'} 4$
concludes the proof of Theorem \ref{theoexample}. \qed
 
 \appendix
 \section{A Littlewood-Paley toolbox}
 Let us recall some well-known results on Littlewood-Paley theory (see for instance\ccite{bcd} for more details).
 \begin{defin}
{\sl Let $\phi \in \mathcal{S}(\mathbb{R}^{3})$ be such that
$\widehat\phi(\xi) = 1$ for $|\xi|\leq 1$ and $\widehat\phi(\xi)= 0$ for
$|\xi|>2$. We define, for~$j \in \Z$, the function~$\phi_{j}(x)\eqdefa
2^{3j}\phi(2^{j}x)$, and the Littlewood--Paley operators
$$ S_{j} \eqdefa \phi_{j}\ast\cdot \quad \mbox{and} \quad \Delta_{j}
\eqdefa S_{j+1} - S_{j}\,.
$$ 
}
\end{defin}
 Homogeneous Sobolev spaces are defined by the norm
$$
\|a\|_{\dot H^s}\eqdefa \Big(\sum_{j \in \ZZ} 2^{2js} \|\Delta_j a\|_{L^2}^2\Big)^\frac12\,.
$$ 
This norm is equivalent to
$$
\|a\|_{\dot H^s}\sim  \Big(\int_{\R^3} |\xi|^{2s}
|\cF a(\xi)|^2 \, d\xi
\Big)^\frac12\,,
$$
where~$\cF$ is the Fourier transform.
Finally let us  recall the definition of Besov norms of negative index.
 \begin{defin}
{\sl 
Let~$\s$  be a positive real number and~$(p,q)$ in~$[1,\infty]^2$. Let us define the homogeneous Besov norm~$\|\cdot\|_{\dot B^{-\s}_{p,q}}$ by
$$
 \|a\|_{\dot B^{-\s}_{p,q}} = \bigl\| t^{\frac \s 2}\| e^{t\D} a\|_{L^p}\bigr\|_{L^q\left(\R^+;\frac {dt} t\right)}\,.
$$
}
\end{defin}
Let us mention that thanks to  the properties of the heat flow, for~$p_1\leq p_2$ and~Ê$q_1\leq q_2$, we have the following inequality, valid for any regular function~$a$
$$
 \|a\|_{\dot B^{-\s-3\left (\frac 1 {p_1} -\frac 1 {p_2}\right)}_{p_2,q}} \lesssim \|a\|_{\dot B^{-\s}_{p_1,q}} \andf
 \|a\|_{\dot B^{-\s}_{p,q_2}} \lesssim \|a\|_{\dot B^{-\s}_{p,q_1}}.
$$
An equivalent definition using the Littlewood-Paley decomposition is
$$
 \|a\|_{\dot B^{-\s}_{p,q}} \sim 
 \Big(\sum_{j \in \ZZ} 2^{-j\sigma q} 
 \|\Delta_j a\|_{L^p}^q\Big)^\frac1q\,.
$$

 \end{document}